\numberwithin{equation}{section}
\newtheorem{theorem}{Theorem}[section]
\newtheorem{proposition}[theorem]{Proposition}
\newtheorem{lemma}[theorem]{Lemma}
\newtheorem{remark}[theorem]{Remark}
\newtheorem{remarks}[theorem]{Remark}
\newtheorem{definition}[theorem]{Definition}
\newcommand{\be}{\begin{equation}}
\newcommand{\ee}{\end{equation}}
\newcommand{\e}{\varepsilon}
\newcommand{\R}{\mathbb R}
\newcommand{\Z}{\mathbb Z}
\newcommand{\N}{\mathbb N}
\newcommand{\T}{\mathbb T}
\newcommand{\ii }{{\rm i} }
\newcommand{\vphi}{\varphi }
\def\ba{\begin{aligned}}
\def\ea{\end{aligned}}
\def\beginm{\begin{multline}}
\def\endm{\end{multline}}
\let\div\undefined
\DeclareMathOperator{\div}{div}
\begin{document}

\title{{\bf The Navier-Stokes equation with time quasi-periodic external force: existence and stability of quasi-periodic solutions}}

\date{}

\author{Riccardo Montalto}

\maketitle

\noindent
{\bf Abstract.}
We prove the existence of small amplitude, time-quasi-periodic solutions (invariant tori)
for the incompressible Navier-Stokes equation 
on the $d$-dimensional torus $\T^d$, 
with a small, quasi-periodic in time
external force. We also show that they are orbitally and asymptotically stable in $H^s$ (for $s$ large enough). More precisely, for any initial datum which is close to the invariant torus, there exists a unique global in time solution which stays close to the invariant torus for all times. Moreover, the solution converges asymptotically to the invariant torus for $t \to + \infty$, with an exponential rate of convergence $O( e^{- \alpha t })$ for any arbitrary $\alpha \in (0, 1)$. 

\smallskip 

\noindent
{\em Keywords:} Fluid dynamics, Navier-Stokes equation, quasi-periodic solutions, asymptotic and orbital stability.

\noindent
{\em MSC 2010:} 37K55, 35Q30, 76D05. 



\tableofcontents

\section{Introduction}\label{introduction}

We consider the Navier-Stokes equation for an incompressible fluid on the $d$-dimensional torus $\T^d$, $d \geq 2$, $\T := \R / 2 \pi \Z$,
\begin{equation}\label{Eulero1}
\begin{cases}
\partial_t u - \Delta u + u \cdot \nabla u  + \nabla p = \e f(\omega t, x) \\
\div u = 0
\end{cases} 
\end{equation}
where $\e \in (0, 1)$ is a small parameter, the frequency $\omega = (\omega_1, \ldots, \omega_\nu) \in \R^\nu$ is a $\nu$-dimensional vector and $f : \T^\nu \times \T^d \to \R^d$ is a smooth quasi-periodic external force. The unknowns of the problem are the velocity field $u = (u_1, \ldots, u_d) : \R \times \T^d \to \R^d$, 
and the pressure $p : \R \times \T^d \to \R$. For convenience, we set the viscosity parameter in front of the laplacian equal to one. We assume that $f$ has zero space-time average, namely
\begin{equation}\label{condizioni media f}
\int_{ \T^\nu \times \T^d} f(\vphi, x)\, d \vphi \, d x = 0\,. 
\end{equation}
The purpose of the present paper is to show the existence and the stability of smooth quasi-periodic solutions of the equation \eqref{Eulero1}. More precisely we show that if $f$ is a sufficiently regular vector field satisfying \eqref{condizioni media f}, for $\e$ sufficiently small and for $\omega \in \R^\nu$ diophantine, i.e. 
\begin{equation}\label{def diofanteo}
\begin{aligned}
& |\omega \cdot \ell| \geq \frac{\gamma}{|\ell|^\nu}, \quad \forall \ell \in \Z^\nu \setminus \{ 0 \}\,, \\
& \text{for some}\qquad \qquad  \gamma \in (0, 1), 
\end{aligned}
\end{equation}
\footnote{It is well known that a.e. frequency in $\R^\nu$ (w.r. to the Lebesgue measure) is diophantine.} then the equation \eqref{Eulero1} admits smooth quasi-periodic solutions (which are referred to also as invariant tori) $u_\omega (t, x) = U(\omega t, x)$, $p_\omega (t, x) = P(\omega t, x)$, $U : \T^\nu \times \T^d \to \R^d$, $P : \T^\nu \times \T^d \to \R$ of size $O(\e)$, oscillating with the same frequency $\omega \in \R^\nu$ of the forcing term. If the forcing term has zero-average in $x$, i.e. 
 \begin{equation}\label{condizioni media f 2}
 \int_{\T^d} f(\vphi, x)\, d x= 0, \quad \forall \vphi \in \T^\nu
 \end{equation}
 then the result holds for any frequency vector $\omega \in \R^\nu$, without requiring any non-resonance condition. Furthermore, we show also the orbital and the asymptotic stability of these quasi-periodic solutions in high Sobolev norms. More precisely, for any sufficiently regular initial datum which is $\delta$-close to the invariant torus (w.r. to the $H^s$ topology), the corresponding solution of \eqref{Eulero1} is global in time and it satisfies the following properties. 
 \begin{itemize}
 \item{\bf Orbital stability.} For all times $t \geq 0$, the distance in $H^s$ between the solution and the invariant torus is of order $O(\delta)$.
 \item{\bf Asymptotic stability.} The solution converges asymptotically to the invariant torus in high Sobolev norm $\| \cdot \|_{H^s_x}$ as $t \to + \infty$, with a rate of convergence which is exponential, i.e. $O(e^{- \alpha t})$, for any arbitrary $\alpha \in (0, 1)$.
 \end{itemize} 
 
 \noindent
In order to state precisely our main results, we introduce some notations. For any vector $a = (a_1, \ldots, a_p) \in \R^p$, we denote by $|a|$ its Euclidean norm, namely $|a| := \sqrt{a_1^2 + \ldots + a_p^2}$. Let $d, n \in \N$ and a function $u \in L^2(\T^d, \R^n)$. Then $u(x)$ can be expanded in Fourier series 
$$
u(x) = \sum_{\xi \in \Z^d} \widehat u(\xi) e^{\ii x \cdot \xi}
$$ 
where its Fourier coefficients $\widehat u(\xi)$ are defined by 
$$
\widehat u(\xi) := \frac{1}{(2 \pi)^d} \int_{\T^d} u(x) e^{- \ii x \cdot \xi}\,d x, \quad \forall \xi \in \Z^d\,. 
$$ 
For any $s \geq 0$, we denote by $H^s(\T^d, \R^n)$ the standard Sobolev space of functions $u : \T^d \to \R^n$ equipped by the norm 
\begin{equation}\label{norma sobolev 0}
\| u \|_{H^s_x} := \Big(\sum_{\xi \in \Z^d} \langle \xi \rangle^{2 s} |\widehat u(\xi)|^2 \Big)^{\frac12}, \quad \langle \xi \rangle := {\rm max}\{ 1, |\xi| \}\,. 
\end{equation}
We also define the Sobolev space of functions with zero average 
\begin{equation}\label{sobolev media nulla}
H^s_0(\T^d, \R^n) := \Big\{ u \in H^s(\T^d, \R^n) : \int_{\T^d} u(x)\, d x = 0 \Big\}\,. 
\end{equation}
Moreover, given a Banach space $(X, \| \cdot \|_X)$ and an interval ${\cal I} \subseteq \R$, we denote by ${\cal C}^0_b({\cal I}, X)$ the space of bounded, continuous functions $u : {\cal I} \to X$, equipped with the sup-norm 
$$
\| u \|_{{\cal C}^0_t X} := \sup_{t \in {\cal I}} \| u(t) \|_X\,. 
$$
For any integer $k \geq 1$, ${\cal C}^k_b({\cal I}, X)$ is the space of $k$-times differentiable functions $u : {\cal I} \to X$ with continuous and bounded derivatives equipped with the norm 
$$
\| u \|_{{\cal C}^k_t X} := {\rm max}_{n \leq k} \| \partial_t^n u \|_{{\cal C}^0_t X}\,. 
$$
In a similar way we define the spaces ${\cal C}^0(\T^\nu, X)$, ${\cal C}^k(\T^\nu, X)$, $k \geq 1$ and the corresponding norms $\| \cdot \|_{{\cal C}^0_\vphi X}$, $\| \cdot \|_{{\cal C}^k_\vphi X}$ (where $\T^\nu$ is the $\nu$-dimensional torus). We also denote by ${\cal C}^N(\T^\nu \times \T^d, \R^d)$ the space of $N$-times continuously differentiable functions $\T^\nu \times \T^d \to \R^d$ equipped with the standard ${\cal C}^N$ norm $\| \cdot \|_{{\cal C}^N}$. 

\noindent
{\bf Notation.} Throughout the whole paper, the notation $A \lesssim B$ means that there exists a constant $C$ which can depend on the number of frequencies $\nu$, the dimension of the torus $d$, the constant $\gamma$ appearing in the diophantine condition \eqref{def diofanteo} and on the ${\cal C}^N$ norm of the forcing term $\| f \|_{{\cal C}^N}$. Given $n$ positive real numbers $s_1, \ldots, s_n > 0$, we write $A \lesssim_{s_1, \ldots, s_n} B$ if there exists a constant $C = C(s_1, \ldots, s_n) > 0$ (eventually depending also on $d, \nu, \gamma, \| f \|_{{\cal C}^N}$) such that $A \leq C B$. 

\medskip

\noindent
We are now ready to state the main results of our paper. 
\begin{theorem}[\bf Existence of quasi-periodic solutions]\label{esistenza quasi-periodiche}
Let $s > d/2 + 1$, $N > \frac{3 \nu}{2} + s + 2$, $\omega \in \R^\nu$ diophantine (see \eqref{def diofanteo}) and assume that the forcing term $f$ is in $ {\cal C}^N(\T^\nu \times \T^d, \R^d)$ and it satisfies \eqref{condizioni media f}. Then there exists $\e_0 = \e_0(f, s, d, \nu ) \in (0,  1)$ small enough and a constant $C = C(f,  s, d, \nu) > 0$ large enough such that for any $\e \in (0, \e_0)$ there exist  $U \in {\cal C}^1(\T^\nu, H^s(\T^d, \R^d))$, $P \in {\cal C}^0(\T^\nu, H^s(\T^d, \R))$ satisfying
$$
\int_{\T^\nu \times \T^d} U(\vphi, x)\, d \vphi\, d x = 0, \quad \int_{\T^d} P(\vphi, x)\, d x= 0, \quad \forall \vphi \in \T^\nu
$$ such that $(u_\omega(t, x), p_\omega( t, x)) : = (U(\omega t, x), P(\omega t, x))$ solves the Navier-Stokes equation \eqref{Eulero1} and $$\| U \|_{{\cal C}^1_\vphi H^s_x}, \| P \|_{{\cal C}^0_\vphi H^s_x} \leq C \e\,.$$If the forcing term $f$ has zero space average, i.e. it satisfies \eqref{condizioni media f 2}, then the same statement holds for any frequency vector $\omega \in \R^\nu$ and $U(\vphi, x)$ satisfies 
$$
\int_{\T^d} U(\vphi, x)\, d x = 0, \quad \forall \vphi \in \T^\nu\,. 
$$
\end{theorem}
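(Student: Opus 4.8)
The plan is to seek the invariant torus in the form $u(t,x)=U(\om t,x)$, $p(t,x)=P(\om t,x)$ with $U:\T^\nu\times\T^d\to\R^d$, $P:\T^\nu\times\T^d\to\R$, so that the system \eqref{Eulero1} becomes the functional equation on $\T^\nu\times\T^d$
\begin{equation}
\om\cdot\pa_\ph U-\Delta U+U\cdot\nabla U+\nabla P=\e f,\qquad \div U=0.
\end{equation}
Applying the Leray projector $\Pi$ onto divergence-free fields removes the pressure (since $\Pi\nabla P=0$ and $\Pi U=U$) and yields the pressure-free equation $\mL U+\Pi(U\cdot\nabla U)=\e\,\Pi f$, where $\mL:=\om\cdot\pa_\ph-\Delta$; the pressure will be recovered at the very end from $\nabla P=(\Id-\Pi)(\e f-U\cdot\nabla U)$.

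The key structural observation is that $\mL$ is diagonal in the exponential basis $e^{\ii(\ell\cdot\ph+\xi\cdot x)}$ with eigenvalues $\ii\,\om\cdot\ell+|\xi|^2$, so the dissipation gives the uniform lower bound $|\ii\,\om\cdot\ell+|\xi|^2|\ge|\xi|^2\ge1$ whenever $\xi\neq0$: on fields with zero spatial average there are \emph{no small divisors} and $\mL^{-1}$ even gains two space derivatives. I therefore split $U=\ov U(\ph)+U^\perp$ into its $x$-average and its zero-$x$-average part. Since $U$ is divergence free one has $U\cdot\nabla U=\div_x(U\otimes U)$, whose $x$-average vanishes; hence averaging the equation in $x$ decouples $\ov U$ completely and leaves the transport equation $\om\cdot\pa_\ph\ov U=\e\,\ov f$, with $\ov f:=(2\pi)^{-d}\!\int_{\T^d}f\,dx$. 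By \eqref{condizioni media f} the function $\ov f$ has zero $\ph$-average, so under the Diophantine condition \eqref{def diofanteo} this is solved explicitly, $\widehat{\ov U}(\ell)=\e\,\widehat{\ov f}(\ell)/(\ii\,\om\cdot\ell)$ for $\ell\neq0$, at the cost of a finite loss of $\nu$ derivatives in $\ph$; this is the only place where non-resonance enters, and when \eqref{condizioni media f 2} holds one simply has $\ov f\equiv0$, $\ov U\equiv0$, and no condition on $\om$ is needed.

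It remains to solve the zero-$x$-average equation for $U^\perp$, which I recast as the fixed point $U^\perp=\mathcal F(U^\perp):=\mL_0^{-1}\big(\e\,\Pi_0^\perp\Pi f-\Pi_0^\perp\Pi\big((\ov U+U^\perp)\cdot\nabla(\ov U+U^\perp)\big)\big)$, where $\mL_0^{-1}$ is the bounded, smoothing inverse of $\mL$ on the zero-$x$-average sector and $\Pi_0^\perp$ the corresponding spatial projection. Here I would work in $H^s$ with $s>d/2+1$, so that $H^{s-1}$ is a Banach algebra and the quadratic term obeys $\|U\cdot\nabla U\|_{H^{s-1}}\lesssim\|U\|_{H^s}^2$; combined with the gain of two derivatives $\|\mL_0^{-1}g\|_{H^{s+1}}\lesssim\|g\|_{H^{s-1}}$, the map $\mathcal F$ sends a ball of radius $O(\e)$ into itself and is a contraction there for $\e$ small, giving a unique $U^\perp$ of size $O(\e)$. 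Because the linearized operator is boundedly invertible (indeed smoothing) on this sector, no Nash–Moser scheme is needed and a plain contraction suffices, which is exactly the simplification afforded by the viscous term.

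The remaining work is the regularity bookkeeping in the angle variable $\ph$. One must propagate $\mathcal C^1_\ph H^s_x$ control of $U$ and $\mathcal C^0_\ph H^s_x$ control of $P$ through the two solution steps: the Diophantine solve for $\ov U$ costs $\nu$ derivatives in $\ph$, and the passage from Sobolev control in $\ell$ to $\mathcal C^1$ in $\ph$ costs a further $\nu/2+1$ by Sobolev embedding on $\T^\nu$, while the spatial $H^s$ regularity is supplied for free by the parabolic smoothing of $\mL_0^{-1}$; balancing these losses against the $\mathcal C^N$ regularity of $f$ is precisely what forces the hypothesis $N>\tfrac{3\nu}{2}+s+2$. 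Finally the pressure is reconstructed from $P=\Delta^{-1}\div(\e f-U\cdot\nabla U)$, which is $O(\e)$ in $\mathcal C^0_\ph H^s_x$ and has zero $x$-average by construction. I expect the delicate point to be exactly this uniform $\ph$-regularity control, together with the clean separation of the single small-divisor solve for $\ov U$ from the genuinely perturbative parabolic fixed point for $U^\perp$.
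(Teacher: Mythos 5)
Your proposal is correct and follows essentially the same route as the paper: Leray projection to a pressure-free equation, splitting $U$ into its $x$-average $\ov U$ (solved explicitly via the Diophantine condition with a loss of $\nu$ derivatives in $\vphi$, and taken $\equiv 0$ when \eqref{condizioni media f 2} holds) plus a zero-mean part obtained by a plain contraction exploiting the two-derivative smoothing of $(\omega\cdot\partial_\vphi-\Delta)^{-1}$ on the zero-average sector, with the same $\mathcal C^1_\vphi$ bookkeeping via Sobolev embedding on $\T^\nu$ and the pressure recovered as $P=(-\Delta)^{-1}\mathrm{div}(\e f-U\cdot\nabla U)$. The one small discrepancy is to your credit: you retain the cross term $\ov U\cdot\nabla U^\perp$ in the fixed-point map, which is indeed present in the full $\pi_0^\bot$-projection but is omitted in the paper's displayed equation \eqref{equazione chiusa per u}; since $\ov U=O(\e)$, this extra linear term is harmless for the contraction either way.
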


\begin{theorem}[\bf Stability]\label{stabilita asintotica}
Let $\alpha \in (0, 1)$, $s > d/2 + 1$, $N > \frac{3 \nu}{2} + s + 2$, $u_\omega$, $p_\omega$ be given in Theorem \ref{esistenza quasi-periodiche}. Then there exists $\delta = \delta(f , s, \alpha,  d, \nu) \in (0, 1)$ small enough and a constant $C = C(f, s, \alpha, d, \nu) > 0$ large enough such that for $\e \leq \delta$ and for any initial datum $u_0 \in H^s(\T^d, \R^d)$ satisfying
$$
\| u_0 - u_\omega(0, \cdot) \|_{H^s_x} \leq \delta, \quad \int_{\T^d} \Big( u_0(x) - u_\omega(0, x) \Big)\, d x = 0
$$
 there exists a unique global classical solution $(u, p)$ of the Navier-Stokes equation \eqref{Eulero1} with initial datum $u(0, x) = u_0(x)$ which satisfies 
$$
\begin{aligned}
& u \in {\cal C}^0_b \Big([0, + \infty), H^s(\T^d, \R^d) \Big) \cap {\cal C}^1_b \Big([0, + \infty), H^{s - 2}(\T^d, \R^d) \Big)\,, \quad p \in {\cal C}^0_b \Big([0, + \infty), H^s_0(\T^d, \R) \Big), \\
& \int_{\T^d}\Big( u(t, x) - u_\omega (t, x) \Big)\, d x = 0\,, \quad \forall t \geq 0\,, \\
& \| u(t, \cdot ) - u_\omega (t, \cdot) \|_{H^s_x}\,,\, \| \partial_t u(t, \cdot ) - \partial_t u_\omega (t, \cdot) \|_{H^{s - 2}_x}\,,\, \| p(t, \cdot) - p_\omega (t, \cdot) \|_{H^s_x} \leq C \delta e^{- \alpha t}
\end{aligned}
$$
for any $t \geq 0$. 
\end{theorem}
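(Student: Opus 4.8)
The plan is to control the difference $v := u - u_\omega$ between a solution $u$ of \eqref{Eulero1} and the quasi-periodic solution $u_\omega$ of Theorem~\ref{esistenza quasi-periodiche}, showing that its $H^s_x$ norm decays like $e^{-\alpha t}$ via an energy estimate that exploits the dissipation of the Laplacian. Subtracting the two copies of \eqref{Eulero1} and expanding $u\cdot\nabla u - u_\omega\cdot\nabla u_\omega = u_\omega\cdot\nabla v + v\cdot\nabla u_\omega + v\cdot\nabla v$ (with $u = u_\omega + v$), the pair $(v, q)$, $q := p - p_\omega$, solves
\[
\pa_t v - \Delta v + u_\omega\cdot\nabla v + v\cdot\nabla u_\omega + v\cdot\nabla v + \nabla q = 0\,, \qquad \div v = 0\,,
\]
with datum $v(0,\cdot) = u_0 - u_\omega(0,\cdot)$. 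Applying the Leray projector $\Pi$ onto divergence-free fields removes $\nabla q$ and fixes $v$ (so $\Pi$, being self-adjoint, may be dropped in inner products against $v$); the pressure is recovered a posteriori from the elliptic identity $\Delta q = -\div\big(u_\omega\cdot\nabla v + v\cdot\nabla u_\omega + v\cdot\nabla v\big)$ obtained by taking the divergence. First I would observe that the zero-average constraint is preserved in time: integrating the equation over $\T^d$ and using $\div u_\omega = \div v = 0$ gives $\tfrac{d}{dt}\int_{\T^d} v\,dx = 0$, whence $\int_{\T^d} v(t,\cdot)\,dx = 0$ for all $t \geq 0$. This is the structural fact that makes the scheme work, since on zero-average fields the Laplacian has a spectral gap equal to $1$: $\la -\Delta w, w\ra_{H^s_x} = \|w\|_{H^{s+1}_x}^2 \geq \|w\|_{H^s_x}^2$.

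The core of the proof is the $H^s_x$ energy estimate. Differentiating $\tfrac12\|v\|_{H^s_x}^2$ and substituting the equation, I would arrive at
\[
\tfrac12\tfrac{d}{dt}\|v\|_{H^s_x}^2 = -\|v\|_{H^{s+1}_x}^2 - \la u_\omega\cdot\nabla v, v\ra_{H^s_x} - \la v\cdot\nabla u_\omega, v\ra_{H^s_x} - \la v\cdot\nabla v, v\ra_{H^s_x}\,.
\]
The three remaining terms are handled by standard tame product and commutator (Kato--Ponce) estimates, legitimate because $s > d/2 + 1$ makes $H^{s-1}_x$ a Banach algebra embedding into $W^{1,\infty}$. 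The decisive cancellations are the divergence-free identities $\la w\cdot\nabla\,\la D\ra^s v, \la D\ra^s v\ra_{L^2} = 0$ when $\div w = 0$, which annihilate the top-order part of the two transport terms $u_\omega\cdot\nabla v$ and $v\cdot\nabla v$, leaving only order-$s$ commutators: this yields $|\la u_\omega\cdot\nabla v, v\ra_{H^s_x}| \lesssim \|u_\omega\|_{H^s_x}\|v\|_{H^s_x}^2 \lesssim \e\|v\|_{H^s_x}^2$ and $|\la v\cdot\nabla v, v\ra_{H^s_x}| \lesssim \|v\|_{W^{1,\infty}}\|v\|_{H^s_x}^2 \lesssim \|v\|_{H^s_x}^3$, while the genuinely lower-order term obeys $|\la v\cdot\nabla u_\omega, v\ra_{H^s_x}| \lesssim \|u_\omega\|_{H^{s+1}_x}\|v\|_{H^s_x}^2 \lesssim \e\|v\|_{H^s_x}^2$ (using that the torus of Theorem~\ref{esistenza quasi-periodiche} may be taken with $\|U\|_{\mathcal C^0_\vphi H^{s+1}_x} \lesssim \e$, $f$ being smooth enough). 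Combining these with the gap $\|v\|_{H^{s+1}_x}^2 \geq \|v\|_{H^s_x}^2$ produces the differential inequality
\[
\tfrac{d}{dt}\|v\|_{H^s_x}^2 \leq -2\big(1 - C\e - C\|v\|_{H^s_x}\big)\|v\|_{H^s_x}^2\,.
\]

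I would then close the argument by continuation. Classical local well-posedness of \eqref{Eulero1} in $H^s_x$ ($s > d/2 + 1$) provides a maximal solution; choosing $\e, \delta$ so small that $C\e + C\delta \leq 1 - \alpha$ and running a bootstrap on $\|v(t)\|_{H^s_x} \leq \delta$, the inequality above improves to $\tfrac{d}{dt}\|v\|_{H^s_x}^2 \leq -2\alpha\|v\|_{H^s_x}^2$, whence $\|v(t)\|_{H^s_x} \leq \|v(0)\|_{H^s_x}\,e^{-\alpha t} \leq \delta\, e^{-\alpha t}$. This a priori bound simultaneously re-improves the bootstrap hypothesis, rules out blow-up and hence forces the solution to be global, and delivers at once orbital stability and the asserted decay $\|u(t,\cdot) - u_\omega(t,\cdot)\|_{H^s_x} \leq C\delta\, e^{-\alpha t}$; uniqueness follows by applying the same energy estimate to the difference of two solutions. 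The remaining bounds are read off from the equation: $\|\pa_t v\|_{H^{s-2}_x} \lesssim \|\Delta v\|_{H^{s-2}_x} + (\text{lower order}) \lesssim \|v\|_{H^s_x} \lesssim \delta e^{-\alpha t}$, and, inverting the Laplacian on zero-average fields, $\|q\|_{H^s_x} \lesssim \|u_\omega\cdot\nabla v + v\cdot\nabla u_\omega + v\cdot\nabla v\|_{H^{s-1}_x} \lesssim (\e + \|v\|_{H^s_x})\|v\|_{H^s_x} \lesssim \delta e^{-\alpha t}$, again using the $H^{s-1}_x$ algebra property. The hard part will be the energy estimate for the transport term $u_\omega\cdot\nabla v$: at the $H^s_x$ level it is a priori of the same order $\|v\|_{H^{s+1}_x}^2$ as the dissipation, and it is only the exact divergence-free cancellation of its principal part that reduces it to the harmless $O(\e)\|v\|_{H^s_x}^2$ commutator; the delicate bookkeeping is to track the constants so that the residual decay rate exceeds any prescribed $\alpha \in (0,1)$.
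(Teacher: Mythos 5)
Your argument is correct in substance, but it takes a genuinely different route from the paper, which never performs an energy estimate: there, one sets up the Duhamel map $\Phi(v) = e^{t\Delta}v_0 + \int_0^t e^{(t-\tau)\Delta}{\cal N}(v)\,d\tau$ and runs a contraction directly in the exponentially weighted space ${\cal E}_s$ with norm $\sup_{t\geq 0}e^{\alpha t}\|v(t)\|_{H^s_x}$ (Propositions \ref{contrazione punto fisso stabilita asintotica} and \ref{proposizione stabilita asintotica con Leray}), so that global existence, uniqueness and the decay $O(\delta e^{-\alpha t})$ all come out of a single fixed point; the loss of one derivative in ${\cal N}(v)$ is absorbed not by your divergence-free cancellation but by the parabolic smoothing estimate $\| e^{t\Delta}u_0\|_{H^s_x}\lesssim_n t^{-n/2}(1-\alpha)^{-n/2}e^{-\alpha t}\|u_0\|_{H^{s-1}_x}$ of Lemma \ref{lemma propagatore libero calore} and its consequence for the Duhamel integral, Proposition \ref{stima dispersiva principale}. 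Your route instead requires three external inputs the paper avoids: local well-posedness in $H^s$ plus a continuation/bootstrap, Kato--Ponce commutator estimates together with the identity $\langle w\cdot\nabla \langle D\rangle^s v, \langle D\rangle^s v\rangle_{L^2}=0$ for $\div w =0$, and a justification of differentiating $t\mapsto \|v(t)\|_{H^s_x}^2$ (the pairing $\langle \pa_t v, v\rangle_{H^s_x}$ costs two derivatives, so one must regularize or invoke parabolic smoothing for $t>0$); all are standard, but should be stated. One point you must make explicit: your bound $|\langle v\cdot\nabla u_\omega, v\rangle_{H^s_x}|\lesssim \e \|v\|_{H^s_x}^2$ uses $\|u_\omega\|_{H^{s+1}_x}\lesssim \e$, which Theorem \ref{esistenza quasi-periodiche} does not assert. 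It does follow under the stated hypothesis $N>\frac{3\nu}{2}+s+2$, since the contraction of Proposition \ref{contrazione quasi periodica} run at regularity $s+1$ only needs $N>\sigma+(s+1)-2=\frac{\nu}{2}+s+1$ with $\sigma=\nu/2+2$, and the average $U_0$ is $x$-independent; alternatively you can avoid the extra regularity entirely by writing $(v\cdot\nabla) u_\omega^i=\div(u_\omega^i\, v)$, integrating by parts onto $\nabla \langle D\rangle^s v^i$ and absorbing $\|v\|_{H^{s+1}_x}$ into the dissipation by Young's inequality. Finally, both methods deliver every $\alpha\in(0,1)$ but degenerate as $\alpha\to 1$ for different reasons: in the paper the constants blow up like $(1-\alpha)^{-n/2}$, whereas in your scheme the spectral gap $1$ of $-\Delta$ on zero-average fields must dominate the margin $C\e+C\delta\leq 1-\alpha$, forcing $\delta\to 0$; your energy method is more elementary and self-contained at the nonlinear level, while the paper's semigroup method yields existence, uniqueness and decay in one stroke without commutator machinery.
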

The investigation of the Navier-Stokes equation with time periodic external force dates back to Serrin \cite{serrin}, Yudovich \cite{Yu 2}, Lions \cite{Lions}, Prodi \cite{prodi2} and Prouse \cite{prouse1}. In these papers the authors proved the existence of weak periodic solutions on bounded domains, oscillating with the same frequency of the external force. The existence of weak quasi-periodic solutions in dimension two has been proved by Prouse \cite{prouse2}. More recently these results have been extended to unbounded domains by Maremonti \cite{maremonti}, Maremonti-Padula \cite{maremonti 2}, Salvi \cite{Salvi} and then by Galdi \cite{Galdi1}, \cite{Galdi2}, Galdi-Silvestre \cite{Galdi3}, Galdi-Kyed \cite{Galdi4} and Kyed \cite{Kyed}. We point out that in some of the aforementioned results, no smallness assumptions on the forcing term are needed and therefore, the periodic solutions obtained are not small in size, see for instance \cite{serrin}, \cite{Yu 2}, \cite{prodi2}, \cite{prouse1}, \cite{prouse2}, \cite{maremonti 2}, \cite{Salvi}. The asymptotic stability of periodic solutions (also referred to as {\it attainability property}) has been also investigated in \cite{maremonti}, \cite{maremonti 2}, but it is only proved w.r. to the $L^2$-norm and the rate of convergence provided is $O(t^{- \eta})$ for some constant $\eta > 0$. More recently Galdi and Hishida \cite{Galdi5} proved the asymptotic stability for the Navier-Stokes equation with a traslation velocity term, by using the Lorentz spaces and they provided a rate of convergence which is essentially $O(t^{- \frac12 + \e})$. In the present paper we consider the Navier-Stokes equation on the $d$-dimensional torus with a small, quasi-periodic in time external force. We show the existence of smooth quasi-periodic solutions (which are also referred to as invariant tori) of small amplitude and we prove their orbital and asymptotic stability in $H^s$ for $s$ large enough (at least larger than $d/2 + 1$). Furthermore the rate of convergence to the invariant torus, in $H^s$, for $t \to + \infty$ is of order $O(e^{- \alpha t})$ for any arbitrary $\alpha \in (0, 1)$. To the best of our knowledge, this is the first result of this kind. 

\noindent
It is also worth to mention that the existence of quasi-periodic solutions, that is also referred to as KAM (Kolmogorov-Arnold-Moser) theory, for dispersive and hyperbolic-type PDEs is a more difficult matter, due to the presence of the so-called {\it small divisors problem}. The existence of time-periodic and quasi-periodic solutions of PDEs started in the late 1980s with the pioneering papers of Kuksin \cite{K87}, Wayne \cite{Wayne} and Craig-Wayne \cite{CW}, see also \cite{K2-KdV}, \cite{LY} for generalizations to PDEs with unbounded nonlinearities. We refer to the recent review \cite{Berti-BUMI-2016} for a complete list of references.

\noindent
Many PDEs arising from fluid dynamics like the water waves equations or the Euler equation are fully nonlinear or quasi-linear equations  (the nonlinear part contains as many derivatives 
as the linear part). 
The breakthrough idea, based on pseudo-differential calculus and micro-local analysis, in order to deal with these kind of PDEs has been introduced 
by Iooss, Plotnikov and Toland \cite{IPT} 
in the problem of finding periodic solutions for the water waves equation. 
The methods developed in \cite{IPT}, combined with a {\it KAM-normal form} procedure 
have been used to develop a general method for PDEs in one dimension,
which allows to construct \emph{quasi-periodic} solutions 
of quasilinear and fully nonlinear PDEs, see \cite{BBM-Airy}, \cite{FP}, \cite{Berti-Montalto}, \cite{BBHM} and references therein.
The extension of KAM theory to higher space dimension $d > 1$
is a difficult matter due to the presence of very strong resonance-phenomena, often related to high multiplicity of eigenvalues. The fist breakthrough results in this directions (for equations with perturbations which do not contain derivatives) have been obtained by Eliasson and Kuksin \cite{EK} and by Bourgain \cite{B} (see also Berti-Bolle \cite{BB1}, \cite{BB2}, Geng-Xu-You \cite{Geng}, Procesi-Procesi \cite{PP}, Berti-Corsi-Procesi \cite{BCP}.) 

Extending KAM theory to PDEs with unbounded perturbations in higher space dimension
is one of the main 
open problems in the field.
Up to now, this has been achieved only in few examples, see \cite{Mon}, \cite{CorsiMontalto}, \cite{FGMP}, \cite{BLM}, \cite{BGMR2}, \cite{BGMR1} and recently on the 3D Euler equation \cite{Eulero} which is the most meaningful physical example. 

\noindent
For the Navier-Stokes equation, unlike in the aforementioned papers on KAM for PDEs, the existence of quasi-periodic solutions is not a small divisors problem and it can be done by using a classical fixed point argument. This is due to the fact that the Navier-Stokes equation is a parabolic PDE and the presence of dissipation avoids the small divisors. In the same spirit, it is also worth to mention \cite{Corsi}, in which the authors investigate quasi-periodic solutions of nonlinear wave equations with dissipation. We also point out that the present paper is the first example in which the stability of invariant tori, in high Sobolev norms, is proved for all times (and it is even an asymptotic stability). This is possible since the presence of the dissipation allows to prove strong time-decay estimates from which one deduces  orbital and asymptotic stability. In the framework of dispersive and hyperbolic PDEs, the orbital stability of invariant tori is usually proved only for {\it large times} by using normal form techniques. The first result in this direction has been proved in \cite{Mem cinesi}. In the remaining part of the introduction, we sketch the main points of our proof.

\noindent 
 As we already explained above, the absence of small divisors is due to the fact that the Navier-Stokes equation is a parabolic PDE. More precisely, this fact is related to invertibility properties of the linear operator $L_\omega := \omega \cdot \partial_\vphi - \Delta$ (where $\omega \cdot \partial_\vphi := \sum_{i = 0}^\nu \omega_i \partial_{\vphi_i}$) acting on Sobolev spaces of functions $u(\vphi, x)$, $(\vphi, x) \in \T^\nu \times \T^d$ with zero average w.r. to $x$. Since the eigenvalues of $L_\omega$ are $\ii \omega \cdot \ell + |j|^2$, $\ell \in \Z^\nu$, $j \in \Z^d \setminus \{ 0 \}$, the inverse of $L_\omega$ {\it gains two space derivatives}, see Lemma \ref{invertibilita L omega}. This is suffcient to perform a fixed point argument on the map $\Phi$ defined in \eqref{mappa punto fisso} from which one deduces the existence of smooth quasi-periodic solutions of small amplitude. The asymptotic and orbital stability of quasi-periodic solutions (which are constructed in Section \ref{sezione costruzione quasi-periodiche}) are proved in Section \ref{sezione stabilita asintotica}. More precisely we show that for any initial datum $u_0$ which is $\delta$-close to the quasi-periodic solution $u_\omega(0, x)$ in $H^s$ norm (and such that $u_0 - u_\omega(0, \cdot)$ has zero average), there exists a unique classical solution $(u, p)$ such that 
$$
\| u (t, \cdot) - u_\omega(t, \cdot) \|_{H^s_x} =  O(\delta e^{- \alpha t}), \quad \| p (t, \cdot) - p_\omega(t, \cdot) \|_{H^s_x} =  O(\delta e^{- \alpha t})\,, \quad \alpha \in (0, 1) 
$$
 for any $t \geq 0$. This is exactly the content of Theorem \ref{stabilita asintotica}, which easily follows from Proposition \ref{proposizione stabilita asintotica con Leray}. This Proposition is proved also by a fixed point argument on the nonlinear map $\Phi$ defined in \eqref{mappa punto fisso stabilita asintotica} in weighted Sobolev spaces ${\cal E}_s$ (see \eqref{def cal Es}), defined by the norm 
$$
\| u \|_{{\cal E}_s} := \sup_{t \geq 0} e^{\alpha t} \| u(t, \cdot) \|_{H^s_x}
$$
where $\alpha \in (0, 1)$ is a fixed constant. The fixed point argument relies on some {\it dispersive-type estimates} for the heat propagator $e^{t \Delta}$, which are proved in Section \ref{stime dispersive propagatore calore}. The key estimates are the following. 
\begin{enumerate}
\item For any $u_0 \in H^{s - 1}(\T^d, \R^d)$ with zero average and for any $n \in \N$, $\alpha \in (0, 1)$, $t > 0$, one has 
\begin{equation}\label{bla bla bla 0}
\| e^{t \Delta} u_0 \|_{H^s_x} \leq C(n, \alpha) t^{- \frac{n}{2}} e^{- \alpha t} \| u_0 \|_{H^{s - 1}_x}
\end{equation}
for some constant $C(n, \alpha) > 0$ (see Lemma \ref{lemma propagatore libero calore}). This estimate states that the heat propagator {\it gains one-space derivative} and exponential decay in time $e^{- \alpha t} t^{- \frac{n}{2}}$. Note that, without gain of derivatives on $u_0$, the exponential decay is stronger, namely $e^{- t}$, see Lemma \ref{lemma propagatore libero calore}-$(i)$.    
\item For any $f \in {\cal E}_{s - 1}$ 
\begin{equation}\label{bla bla bla 1}
\Big\| \int_0^t e^{(t - \tau)\Delta} f(\tau, \cdot)\, d\tau \Big\|_{H^s_x} \leq C(\alpha) e^{- \alpha t} \| f \|_{{\cal E}_{s - 1}}
\end{equation}
for some constant $C(\alpha) > 0$ (see Proposition \ref{stima dispersiva principale}). This estimate states that the integral term which usually appears in the Duhamel formula (see \eqref{mappa punto fisso stabilita asintotica}) gains one space derivative w.r. to $f (t, x)$ and keeps the same exponential decay in time as $f(t, x)$. 
\end{enumerate}
We also remark that the constants $C(n, \alpha)$, $C(\alpha)$ appearing in the estimates \eqref{bla bla bla 0}, \eqref{bla bla bla 1} tend to $\infty$ when $\alpha \to 1$. This is the reason way it is not possible to get a decay $O(e^{- t})$ in the asymptotic stability estimate provided in Theorem \ref{stabilita asintotica}.

\noindent
The latter two estimates allow to show in Proposition \ref{contrazione punto fisso stabilita asintotica} that the map $\Phi$ defined in \eqref{mappa punto fisso stabilita asintotica} is a contraction. The proof of Theorem \ref{stabilita asintotica} is then easily concluded in Section \ref{fine dim stabilita asintotica}. 

\noindent
As a concluding remark, it is also worth to mention that our methods does not cover the zero viscosity limit $\mu \to 0$, where $\mu$ is the usual viscosity parameter in front of the laplacian (that we set for convenience equal to one). Indeed some constants in our estimates become infinity when $\mu  \to 0$. Actually, it would be very interesting to study the {\it singular perturbation problem} for $\mu \to 0$ and to see if one is able to recover the quasi-periodic solutions of the Euler equation constructed in \cite{Eulero}.

\bigskip

\noindent
{\sc Acknowledgements.} The author is supported by INDAM-GNFM. The author warmly thanks Dario Bambusi, Luca Franzoi, Thomas Kappeler and Alberto Maspero for their feedbacks.  
\section{Functional spaces}
In this section we collect some standard technical tools which will be used in the proof of our results.  

\noindent
For $u = (u_1, \ldots, u_n) \in H^s(\T^d, \R^n)$, 
\begin{equation}\label{equivalenza sobolev 0}
\| u \|_{H^s_x} \simeq {\rm max}_{i = 1, \ldots, n} \| u_i \|_{H^s_x}\,. 
\end{equation}
The following standard algebra lemma holds. 
\begin{lemma}\label{interpolazione sobolev 0}
Let $s > d/2$ and $u, v \in H^s(\T^d, \R^n)$. Then $u \cdot v \in H^s(\T^d, \R)$ (where $\cdot$ denotes the standard scalar product on $\R^n$) and $\| u \cdot v \|_{H^s_x} \lesssim_s \| u \|_{H^s_x} \| v \|_{H^s_x}$. 
\end{lemma}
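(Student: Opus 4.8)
The plan is to reduce to the scalar case and then work entirely with Fourier coefficients, exploiting that $H^s(\T^d)$ embeds into the Wiener algebra when $s>d/2$. First, since $u\cdot v=\sum_{i=1}^n u_i v_i$, the triangle inequality together with the norm equivalence \eqref{equivalenza sobolev 0} reduces the claim to the scalar statement: for $f,g\in H^s(\T^d,\R)$ one has $\|fg\|_{H^s_x}\lesssim_s\|f\|_{H^s_x}\|g\|_{H^s_x}$.

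For the scalar case I would pass to Fourier series. The product has coefficients $\widehat{fg}(\xi)=\sum_{\eta\in\Z^d}\widehat f(\xi-\eta)\widehat g(\eta)$, so by the definition \eqref{norma sobolev 0} one has $\|fg\|_{H^s_x}^2=\sum_{\xi\in\Z^d}\langle\xi\rangle^{2s}|\widehat{fg}(\xi)|^2$. The elementary inequality $\langle\xi\rangle^s\lesssim_s\langle\xi-\eta\rangle^s+\langle\eta\rangle^s$, which follows from $|\xi|\le|\xi-\eta|+|\eta|$ and $\langle\xi\rangle:=\max\{1,|\xi|\}$, lets me bound $\langle\xi\rangle^s|\widehat{fg}(\xi)|$ by the sum of two convolutions on $\Z^d$: the term $A(\xi):=\sum_\eta\langle\xi-\eta\rangle^s|\widehat f(\xi-\eta)|\,|\widehat g(\eta)|$ carrying the weight on $\widehat f$, and the symmetric term $B(\xi):=\sum_\eta|\widehat f(\xi-\eta)|\,\langle\eta\rangle^s|\widehat g(\eta)|$ carrying it on $\widehat g$.

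Each of $A,B$ is an $\ell^1*\ell^2$ convolution, so I would apply Young's inequality: $\|A\|_{\ell^2_\xi}\le\big\|\langle\cdot\rangle^s\widehat f\big\|_{\ell^2}\,\big\||\widehat g|\big\|_{\ell^1}=\|f\|_{H^s_x}\sum_\eta|\widehat g(\eta)|$, and likewise $\|B\|_{\ell^2_\xi}\le\big\||\widehat f|\big\|_{\ell^1}\,\|g\|_{H^s_x}$. The single place where the hypothesis $s>d/2$ enters, and the only genuinely quantitative point, is the bound $\sum_\eta|\widehat g(\eta)|\le\big(\sum_\eta\langle\eta\rangle^{-2s}\big)^{1/2}\|g\|_{H^s_x}\lesssim_s\|g\|_{H^s_x}$, obtained by Cauchy--Schwarz, where the series $\sum_\eta\langle\eta\rangle^{-2s}$ converges exactly because $2s>d$. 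Combining these estimates gives $\|fg\|_{H^s_x}\le\|A\|_{\ell^2_\xi}+\|B\|_{\ell^2_\xi}\lesssim_s\|f\|_{H^s_x}\|g\|_{H^s_x}$, and summing over the $n$ components via \eqref{equivalenza sobolev 0} closes the proof. I do not expect a real obstacle: this is the classical Sobolev algebra estimate, and the only things to keep clean are the convolution/Young step and recording precisely the role of the threshold $s>d/2$ in the summability of $\langle\cdot\rangle^{-2s}$.
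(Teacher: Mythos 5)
Your proof is correct: the splitting $\langle\xi\rangle^s\lesssim_s\langle\xi-\eta\rangle^s+\langle\eta\rangle^s$, the Young inequality for the $\ell^1\ast\ell^2$ convolutions $A$ and $B$, and the Cauchy--Schwarz step in which $\sum_{\eta\in\Z^d}\langle\eta\rangle^{-2s}<\infty$ exactly because $s>d/2$ are all sound, as is the componentwise reduction via \eqref{equivalenza sobolev 0}. The paper gives no proof at all --- it records the statement as a ``standard algebra lemma'' --- and your argument is precisely the classical proof being invoked, so it matches the paper's (implicit) approach.
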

We also consider functions 
$$
\T^\nu \to L^2(\T^d, \R^n), \quad \vphi \mapsto u(\vphi, \cdot)
$$
which are in $L^2\Big(\T^\nu, L^2(\T^d, \R^n)\Big)$. We can write the Fourier series of a function 

$u \in L^2\Big(\T^\nu, L^2(\T^d, \R^n)\Big)$ as 
\begin{equation}\label{fourier tempo 0}
u(\vphi, \cdot) = \sum_{\ell \in \Z^\nu} \widehat u(\ell, \cdot) e^{\ii \ell \cdot \vphi}
\end{equation}
where 
\begin{equation}\label{fourier tempo 1}
\widehat u(\ell, \cdot) := \frac{1}{(2 \pi)^\nu} \int_{\T^\nu}u(\vphi, \cdot) e^{- \ii \ell \cdot \vphi}\, d \vphi \in L^2(\T^d, \R^n), \quad \ell \in \Z^\nu\,. 
\end{equation}
By expanding also the function $\widehat u(\ell, \cdot)$ in Fourier series, we get 
\begin{equation}\label{fourier tempo 2}
\begin{aligned}
& \widehat u(\ell , x) = \sum_{j \in \Z^d} \widehat u(\ell, j) e^{\ii j \cdot x}\,, \\
& \widehat u(\ell , j) := \frac{1}{(2 \pi)^{\nu + d}} \int_{\T^{\nu + d}} u(\vphi, x) e^{- \ii \ell \cdot \vphi} e^{- \ii j \cdot x}\, d \vphi\, d x, \quad (\ell, j) \in \Z^\nu \times \Z^d
\end{aligned}
\end{equation}
and hence we can write 
\begin{equation}\label{fourier tempo 3}
u(\vphi, x) = \sum_{\ell \in \Z^\nu} \sum_{j \in \Z^d} \widehat u(\ell, j) e^{\ii \ell \cdot \vphi} e^{\ii j \cdot x}\,. 
\end{equation}
For any $\sigma, s \geq 0$, we define the Sobolev space $H^\sigma\Big(\T^\nu, H^s(\T^d, \R^n) \Big)$ as the space of functions $u \in L^2\Big(\T^\nu, L^2(\T^d, \R^n) \Big)$ equipped by the norm 
\begin{equation}\label{norma sobolev mista}
\| u \|_{\sigma, s} \equiv \| u \|_{H^\sigma_\vphi H^s_x} := \Big( \sum_{\ell \in \Z^\nu} \langle \ell \rangle^{2 \sigma} \| \widehat u(\ell) \|_{H^s_x} \Big)^{\frac12} = \Big(\sum_{\ell \in \Z^\nu} \sum_{j \in \Z^d} \langle \ell \rangle^{2 \sigma} \langle j \rangle^{2 s} |\widehat u(\ell, j)|^2 \Big)^{\frac12}\,.
\end{equation}
Similarly to \eqref{equivalenza sobolev 0}, one has that for $u = (u_1, \ldots, u_n) \in H^\sigma\Big(\T^\nu, H^s(\T^d, \R^n) \Big)$
\begin{equation}\label{equivalenza sobolev 1}
\| u \|_{\sigma, s} \simeq {\rm max}_{i = 1, \ldots, n} \| u_i \|_{\sigma, s}\,. 
\end{equation}
If $\sigma >\nu /2$, then 
\begin{equation}\label{sobolev embedding 1}
\begin{aligned}
& H^\sigma\Big( \T^\nu, H^s(\T^d, \R^n) \Big) \quad \text{is compactly embedded in} \quad {\cal C}^0 \Big(\T^\nu,  H^s(\T^d, \R^n) \Big)\,, \\
& \text{and} \quad \| u \|_{{\cal C}^0_\vphi H^s_x} \lesssim_\sigma \| u \|_{H^\sigma_\vphi H^s_x}\,. 
\end{aligned}
\end{equation}
Moreover, the following standard algebra property holds. 
\begin{lemma}\label{lemma algebra}
Let $\sigma > \frac{\nu}{2}$, $s > \frac{d}{2}$, $u, v \in H^\sigma\Big(\T^\nu, H^s(\T^d, \R^n) \Big)$. Then  $u \cdot v \in H^\sigma\Big(\T^\nu, H^s(\T^d, \R) \Big)$ and $\| u \cdot  v \|_{\sigma, s} \lesssim_{\sigma, s} \| u \|_{\sigma, s} \| v \|_{\sigma, s}$. 
\end{lemma}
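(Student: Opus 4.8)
The plan is to combine the scalar algebra property in $H^s_x$ (Lemma~\ref{interpolazione sobolev 0}) with a convolution estimate in the time–frequency variable $\ell \in \Z^\nu$, exploiting that the weight $\langle \ell \rangle^\sigma$ turns the $\vphi$-Fourier side of $\|\cdot\|_{\sigma,s}$ into a weighted $\ell^2$ space which is itself an algebra under convolution as soon as $\sigma > \nu/2$. First I would expand the product in Fourier series in $\vphi$: since the multiplication in $x$ is pointwise, for each $\ell \in \Z^\nu$ one has
\begin{equation*}
\widehat{u \cdot v}(\ell, \cdot) = \sum_{\ell' \in \Z^\nu} \widehat u(\ell - \ell', \cdot) \cdot \widehat v(\ell', \cdot),
\end{equation*}
where the dot on the right is the scalar product in $\R^n$ of two elements of $H^s(\T^d, \R^n)$.

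Next, applying the triangle inequality in $H^s_x$ followed by the scalar algebra estimate of Lemma~\ref{interpolazione sobolev 0} to each summand, I obtain
\begin{equation*}
\| \widehat{u \cdot v}(\ell) \|_{H^s_x} \lesssim_s \sum_{\ell' \in \Z^\nu} \| \widehat u(\ell - \ell') \|_{H^s_x}\, \| \widehat v(\ell') \|_{H^s_x} = (a * b)(\ell),
\end{equation*}
where I set $a(\ell) := \| \widehat u(\ell) \|_{H^s_x}$, $b(\ell) := \| \widehat v(\ell) \|_{H^s_x}$ and $*$ denotes convolution on $\Z^\nu$. Observe that $\| \langle \cdot \rangle^\sigma a \|_{\ell^2} = \| u \|_{\sigma, s}$ and likewise $\| \langle \cdot \rangle^\sigma b \|_{\ell^2} = \| v \|_{\sigma, s}$, so once the convolution on the right is controlled in weighted $\ell^2$ we will simultaneously have $u \cdot v \in H^\sigma(\T^\nu, H^s(\T^d, \R))$ and the desired bound.

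The core step is this weighted convolution estimate. Using the elementary inequality $\langle \ell \rangle^\sigma \lesssim_\sigma \langle \ell - \ell' \rangle^\sigma + \langle \ell' \rangle^\sigma$ (valid for $\sigma \geq 0$ since $\langle \ell \rangle \leq \langle \ell - \ell' \rangle + \langle \ell' \rangle$), I split
\begin{equation*}
\langle \ell \rangle^\sigma (a * b)(\ell) \lesssim_\sigma \big( (\langle \cdot \rangle^\sigma a) * b \big)(\ell) + \big( a * (\langle \cdot \rangle^\sigma b) \big)(\ell),
\end{equation*}
and then Young's inequality $\| f * g \|_{\ell^2} \leq \| f \|_{\ell^2} \| g \|_{\ell^1}$ yields
\begin{equation*}
\| u \cdot v \|_{\sigma, s} \lesssim_s \| \langle \cdot \rangle^\sigma (a * b) \|_{\ell^2} \lesssim_\sigma \| \langle \cdot \rangle^\sigma a \|_{\ell^2} \| b \|_{\ell^1} + \| a \|_{\ell^1} \| \langle \cdot \rangle^\sigma b \|_{\ell^2}.
\end{equation*}
Finally I convert the $\ell^1$ norms into $\|\cdot\|_{\sigma,s}$ norms by Cauchy--Schwarz, e.g. $\| b \|_{\ell^1} = \sum_\ell \langle \ell \rangle^{-\sigma} \langle \ell \rangle^\sigma b(\ell) \leq \big( \sum_\ell \langle \ell \rangle^{-2\sigma} \big)^{1/2} \| \langle \cdot \rangle^\sigma b \|_{\ell^2}$, where the series $\sum_{\ell \in \Z^\nu} \langle \ell \rangle^{-2\sigma}$ converges precisely because $2\sigma > \nu$. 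This gives $\| u \cdot v \|_{\sigma, s} \lesssim_{\sigma, s} \| u \|_{\sigma, s} \| v \|_{\sigma, s}$.

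The only genuinely delicate hypothesis is $\sigma > \nu/2$, which enters exactly once, through the convergence of $\sum_\ell \langle \ell \rangle^{-2\sigma}$; everything else mirrors the proof of the classical algebra property of $H^\sigma(\T^\nu)$. The main obstacle, such as it is, is bookkeeping: keeping the two layers separate — the $\R^n$-valued $x$-product absorbed once and for all by Lemma~\ref{interpolazione sobolev 0}, and the $\vphi$-convolution handled by Young plus the Peetre-type splitting — so that the final constant depends only on $s$ and $\sigma$.
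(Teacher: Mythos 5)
Your proof is correct; note that the paper itself states Lemma \ref{lemma algebra} without proof, as a standard fact, and your argument --- fibrewise application of Lemma \ref{interpolazione sobolev 0} in $x$, followed by the Peetre-type splitting $\langle \ell \rangle^\sigma \lesssim_\sigma \langle \ell - \ell' \rangle^\sigma + \langle \ell' \rangle^\sigma$, Young's inequality $\ell^2 * \ell^1 \subseteq \ell^2$, and Cauchy--Schwarz with $\sum_{\ell \in \Z^\nu} \langle \ell \rangle^{-2\sigma} < \infty$ for $\sigma > \nu/2$ --- is exactly the standard argument the paper implicitly invokes. The only cosmetic caveat is that the Fourier coefficients $\widehat u(\ell, \cdot)$, $\widehat v(\ell, \cdot)$ are complex-valued, so Lemma \ref{interpolazione sobolev 0} (stated for $\R^n$-valued functions) must be applied via its trivial extension to $\C^n$-valued functions, e.g.\ by splitting into real and imaginary parts, which changes nothing in the estimate.
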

For any $u \in L^2(\T^d, \R^n)$ we define the orthogonal projections $\pi_0$ and $\pi_0^\bot$ as
\begin{equation}\label{propiettori media nulla}
\begin{aligned}
\pi_0 u := \frac{1}{(2 \pi)^d} \int_{\T^d} u(x)\, d x = \widehat u(0) \quad \text{and} \quad \pi_0^\bot u  := u - \pi_0 u= \sum_{\xi \in \Z^d \setminus \{ 0 \}} \widehat u(\xi) e^{\ii x \cdot \xi}\,. 
\end{aligned}
\end{equation}
According to \eqref{propiettori media nulla}, \eqref{fourier tempo 3}, every function $u \in L^2\Big(\T^\nu, L^2(\T^d, \R^n) \Big)$ can be decomposed as 
\begin{equation}\label{decomposizione media u vphi x}
\begin{aligned}
u(\vphi, x) & = u_0(\vphi) + u_\bot (\vphi, x)\,, \\
u_0(\vphi) & := \pi_0 u(\vphi) = \sum_{\ell \in \Z^\nu} \widehat u(\ell, 0) e^{\ii \ell \cdot \vphi}\,, \\
u_\bot(\vphi, x) & := \pi_0^\bot u(\vphi, x) = \sum_{\ell \in \Z^\nu} \sum_{j \in \Z^d \setminus \{ 0 \}} \widehat u(\ell, j) e^{\ii \ell \cdot \vphi} e^{\ii j \cdot x}\,.
\end{aligned}
\end{equation}
Clearly if $u \in H^\sigma\Big( \T^\nu, H^s(\T^d, \R^n) \Big)$, $\sigma , s \geq 0$, then 
\begin{equation}\label{pi 0 pi 0 bot u norma}
\begin{aligned}
& u_0 \in H^\sigma(\T^\nu, \R^d) \quad \text{and} \quad  \|u_0 \|_\sigma \leq \| u \|_{\sigma, 0} \leq \| u \|_{\sigma, s}\,, \\
&  u_\bot \in H^\sigma\Big( \T^\nu, H^s_0(\T^d, \R^n) \Big) \quad \text{and} \quad \|  u_\bot \|_{\sigma, s} \leq \| u \|_{\sigma, s}\,, \\
& \| u \|_{\sigma, s} = \| u_0 \|_\sigma + \| u_\bot \|_{\sigma, s}\,. 
\end{aligned}
\end{equation}

\noindent
We also prove the following lemma that we shall apply in Section \ref{sezione stabilita asintotica}. 
\begin{lemma}\label{piccolo lemma quasi periodiche}
Let $\sigma > \nu/2$, $U \in H^\sigma \Big(\T^\nu, H^s(\T^d, \R^n) \Big)$ and $\omega \in \R^\nu$. Defining $u_\omega (t, x) := U(\omega t, x)$, $(t, x) \in \R \times \T^d$, one has that $u_\omega \in {\cal C}^0_b\Big(\R, H^s(\T^d, \R^n) \Big)$ and $\| u_\omega \|_{{\cal C}^0_t H^s_x} \lesssim_\sigma \| U \|_{\sigma, s}$.
\end{lemma}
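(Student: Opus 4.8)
The plan is to reduce the statement to the Sobolev embedding \eqref{sobolev embedding 1} together with the continuity of the linear flow $t \mapsto \omega t$ on the torus. Since $\sigma > \nu/2$, the embedding \eqref{sobolev embedding 1} guarantees that $U \in {\cal C}^0\big(\T^\nu, H^s(\T^d, \R^n)\big)$, i.e.\ the map $\vphi \mapsto U(\vphi, \cdot)$ is continuous from $\T^\nu$ into $H^s(\T^d, \R^n)$, and moreover
\[
\| U \|_{{\cal C}^0_\vphi H^s_x} \lesssim_\sigma \| U \|_{\sigma, s}.
\]

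First I would observe that the map $\Theta : \R \to \T^\nu$, $\Theta(t) := \omega t \ (\mathrm{mod}\ 2\pi)$, is continuous (indeed Lipschitz). Therefore $u_\omega(t, \cdot) = U(\Theta(t), \cdot)$ is the composition of the continuous map $\Theta$ with the continuous map $U : \T^\nu \to H^s(\T^d, \R^n)$, whence $t \mapsto u_\omega(t, \cdot)$ is continuous from $\R$ to $H^s(\T^d, \R^n)$; this gives $u_\omega \in {\cal C}^0(\R, H^s(\T^d, \R^n))$.

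For boundedness, since $\Theta(t)$ ranges inside the compact torus $\T^\nu$,
\[
\| u_\omega \|_{{\cal C}^0_t H^s_x} = \sup_{t \in \R} \| U(\omega t, \cdot) \|_{H^s_x} \leq \sup_{\vphi \in \T^\nu} \| U(\vphi, \cdot) \|_{H^s_x} = \| U \|_{{\cal C}^0_\vphi H^s_x} \lesssim_\sigma \| U \|_{\sigma, s},
\]
which yields both $u_\omega \in {\cal C}^0_b(\R, H^s(\T^d, \R^n))$ and the claimed estimate.

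There is essentially no serious obstacle here; the only point requiring a little care is the continuity in $t$, which I would \emph{not} attempt to establish by manipulating the Fourier series in $\vphi$ directly (an argument through difference quotients in $t$ would produce factors of $\langle \ell \rangle$ that are not controlled by the $H^s_x$-regularity alone), but rather obtain abstractly as the composition of two continuous maps, the second being continuous precisely because the hypothesis $\sigma > \nu/2$ provides the embedding $H^\sigma_\vphi H^s_x \hookrightarrow {\cal C}^0_\vphi H^s_x$ of \eqref{sobolev embedding 1}. The boundedness and the quantitative norm bound are then immediate from the compactness of $\T^\nu$ and the same embedding inequality.
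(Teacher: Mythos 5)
Your proof is correct and follows essentially the same route as the paper: the Sobolev embedding \eqref{sobolev embedding 1} gives $U \in {\cal C}^0\big(\T^\nu, H^s(\T^d, \R^n)\big)$ with $\| U \|_{{\cal C}^0_\vphi H^s_x} \lesssim_\sigma \| U \|_{\sigma, s}$, and composing with the continuous map $t \mapsto \omega t$ yields continuity, boundedness, and the estimate. The paper's proof is just a terser version of your argument, and your remark about avoiding a direct Fourier-in-$\vphi$ manipulation is a sensible clarification rather than a deviation.
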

\begin{proof}
By the Sobolev embedding \eqref{sobolev embedding 1}, and using that the map $\R \to \T^d$, $t \mapsto \omega t$ is continuous, one has that $u_\omega \in {\cal C}^0_b\Big(\R, H^s(\T^d, \R^n) \Big)$ and 
$$
\| u_\omega \|_{{\cal C}^0_t H^s_x} \leq \| U \|_{{\cal C}^0_\vphi H^s_x} \lesssim_\sigma \| U \|_{H^\sigma_\vphi H^s_x}\,. 
$$
\end{proof}

\subsection{Leray projector and some elementary properties of the Navier-Stokes equation}
We introduce the space of zero-divergence vector fields
\begin{equation}\label{zero divergenza spazio}
{\cal D}_0(\T^d) := \Big\{ u \in L^2(\T^d, \R^d) : {\rm div}(u) = 0 \Big\}
\end{equation} 
where clearly the divergence has to be interpreted in a distributional sense. The $L^2$-orthogonal projector on this subspace of $L^2(\T^d, \R^d)$ is called the {\it Leray} projector and its explicit formula is given by 
\begin{equation}\label{def proiettore leray}
\begin{aligned}
& {\frak L } : L^2(\T^d, \R^d) \to {\cal D}_0(\T^d)\,, \\
& {\frak L}(u) := u + \nabla (- \Delta)^{- 1} {\rm div}(u)
\end{aligned}
\end{equation}
where the inverse of the laplacian (on the space of zero average functions) $(- \Delta)^{- 1}$ is defined by 
\begin{equation}\label{inverso laplaciano}
(- \Delta)^{- 1} u(x) := \sum_{\xi \in \Z^d \setminus \{ 0 \}} \frac{1}{|\xi|^2} \widehat u(\xi) e^{\ii x \cdot \xi}\,. 
\end{equation}
By expanding in Fourier series, the Leray projector ${\frak L}$ can be written as 
\begin{equation}\label{Leray Fourier}
{\frak L}(u)(x) = u(x) + \sum_{\xi \in \Z^d \setminus \{ 0 \}} \frac{\xi}{|\xi|^2} \xi \cdot \widehat u(\xi) e^{\ii x \cdot \xi}\,. 
\end{equation}
By the latter formula, one immediately deduces some elementary properties of the Leray projector ${\frak L}$. One has  
\begin{equation}\label{media proiettore di Leray}
\int_{\T^d} {\frak L}(u)(x)\, d x = \int_{\T^d} u(x)\, d x, \quad \forall u \in L^2(\T^d, \R^d)
\end{equation}
and for any Fourier multipier $\Lambda$, $\Lambda u(x) = \sum_{\xi \in \Z^d} \Lambda (\xi) \widehat u(\xi) e^{\ii x \cdot \xi}$, the commutator 
\begin{equation}\label{commutatore Leray fourier multiplier}
[{\frak L}, \Lambda] = {\frak L} \Lambda - \Lambda {\frak L} = 0\,. 
\end{equation}
Moreover 
\begin{equation}\label{boundedness Leray projector}
\begin{aligned}
& \|{\frak L} (u) \|_{H^s_x} \lesssim \| u \|_{H^s_x}, \quad \forall u \in H^s(\T^d, \R^d)\,, \\
& \| {\frak L}(u) \|_{\sigma, s} \lesssim \| u \|_{\sigma, s}, \quad \forall u \in H^\sigma\Big(\T^\nu, H^s(\T^d, \R^d) \Big) \,. 
\end{aligned}
\end{equation}
For later purposes, we now prove the following Lemma. 
\begin{lemma}\label{prop nonlinearita per quasi periodiche}

\noindent
$(i)$ Let $u, v \in H^1(\T^d, \R^d)$ and assume that ${\rm div}(u) = 0$, then $u \cdot \nabla v$, ${\frak L}(u \cdot \nabla v)$ have zero average.

\noindent
$(ii)$ Let $\sigma > \nu/2$, $s > d/2$, $u \in H^\sigma\Big( \T^\nu, H^s(\T^d, \R^d) \Big)$, $v \in H^\sigma\Big( \T^\nu, H^{s + 1}(\T^d, \R^d) \Big)$. Then $u \cdot \nabla v \in  H^\sigma\Big( \T^\nu, H^s(\T^d, \R^d) \Big)$ and $\| u \cdot \nabla v\|_{\sigma, s} \lesssim_{\sigma, s} \| u \|_{\sigma, s} \| v \|_{\sigma, s + 1}$. 
\end{lemma}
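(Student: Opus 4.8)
The plan is to handle the two parts separately, each by a short and essentially elementary computation resting on structure already recorded in the excerpt.

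For part $(i)$, the guiding observation is that the divergence-free hypothesis turns the advection term into a pure divergence. Working componentwise with $(u\cdot \nabla v)_i = \sum_{k=1}^d u_k \partial_{x_k} v_i$, I would apply the Leibniz rule $u_k \partial_{x_k} v_i = \partial_{x_k}(u_k v_i) - (\partial_{x_k} u_k) v_i$ and sum over $k$, so that $(u\cdot \nabla v)_i = \div(v_i u) - (\div u)\,v_i = \div(v_i u)$ once $\div u = 0$ is used. Integrating over $\T^d$, the integral of a divergence vanishes (the torus has no boundary), hence $\int_{\T^d}(u\cdot \nabla v)_i\,dx = 0$ for every $i$, i.e. $u\cdot \nabla v$ has zero average. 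For ${\frak L}(u\cdot \nabla v)$ I would simply invoke \eqref{media proiettore di Leray}, which says that the Leray projector preserves the space average; since the average of $u\cdot \nabla v$ vanishes, so does that of its projection. The only point deserving care is justifying the integration by parts at the low regularity $H^1$: I would note that $v_i u \in W^{1,1}(\T^d,\R^d)$ because each term of $\partial_{x_k}(u_k v_i)$ is a product of two $L^2$ functions, hence $L^1$ by Cauchy--Schwarz, so the divergence theorem on $\T^d$ applies (alternatively one argues by density of smooth fields in $H^1$).

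For part $(ii)$, the strategy is to reduce the estimate to the scalar algebra Lemma \ref{lemma algebra}. First, by the equivalence \eqref{equivalenza sobolev 1} it suffices to bound each component $(u\cdot \nabla v)_i = \sum_{k=1}^d u_k \partial_{x_k} v_i$, and since this is a finite sum of $d$ terms I may estimate each summand $u_k \partial_{x_k} v_i$ separately via the triangle inequality. Second, I would transfer one space derivative off $v$: comparing Fourier coefficients in the definition \eqref{norma sobolev mista} of the mixed norm and using $|j_k| \leq \langle j \rangle$, one checks directly that $\|\partial_{x_k} v_i\|_{\sigma, s} \leq \|v_i\|_{\sigma, s+1}$. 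Third, $u_k$ and $\partial_{x_k} v_i$ are scalar functions in $H^\sigma(\T^\nu, H^s(\T^d, \R))$ (here I use $\sigma > \nu/2$ and $s > d/2$), so Lemma \ref{lemma algebra}, in the case $n=1$ where the scalar product is ordinary multiplication, gives $\|u_k \partial_{x_k} v_i\|_{\sigma, s} \lesssim_{\sigma, s} \|u_k\|_{\sigma, s}\|\partial_{x_k} v_i\|_{\sigma, s}$. Chaining these bounds and summing over $i,k$ yields $\|u\cdot \nabla v\|_{\sigma, s} \lesssim_{\sigma, s} \|u\|_{\sigma, s}\|v\|_{\sigma, s+1}$, as claimed.

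Neither part presents a genuine obstacle: the lemma is a routine consequence of the divergence-free structure for $(i)$ and of the algebra property combined with the single-derivative loss from $H^{s+1}$ to $H^s$ for $(ii)$. If anything, the only mildly delicate point is the regularity bookkeeping in part $(i)$, namely ensuring the integration by parts is licit at $H^1$ regularity rather than merely for smooth vector fields.
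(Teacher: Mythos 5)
Your proposal is correct and takes essentially the same route as the paper: part $(i)$ is the same integration by parts (your rewriting $(u\cdot\nabla v)_i = {\rm div}(v_i u)$ is just the Leibniz identity behind the paper's one-line computation) combined with \eqref{media proiettore di Leray}, and part $(ii)$ is the same componentwise reduction via \eqref{equivalenza sobolev 1} and the algebra Lemma \ref{lemma algebra}, with the derivative transfer $\|\partial_{x_k} v_i\|_{\sigma,s}\leq \|v_i\|_{\sigma,s+1}$ left implicit in the paper. Your extra care in licensing the integration by parts at $H^1$ regularity (via $v_i u\in W^{1,1}$ or density of smooth fields) is a welcome refinement that the paper omits but does not alter the argument.
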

\begin{proof}
{\sc Proof of $(i)$.} By integrating by parts, 
$$
\begin{aligned}
\int_{\T^d}{\frak L}( u \cdot \nabla v)\, d x \stackrel{\eqref{media proiettore di Leray}}{=} \int_{\T^d} u \cdot \nabla v\, d x = - \int_{\T^d} {\rm div}(u) v\, d x  = 0\,. 
\end{aligned}
$$
{\sc Proof of $(ii)$.} For $u = (u_1, \ldots, u_d)$, $v = (v_1, \ldots, v_d)$, the vector field $u \cdot \nabla v$ is given by 
$$
u \cdot \nabla v = \Big(u \cdot \nabla v_1, u \cdot \nabla v_2, \ldots, u \cdot \nabla v_d \Big)\,.
$$
Then the claimed statement follows by \eqref{equivalenza sobolev 1} and the algebra Lemma \ref{lemma algebra}. 
\end{proof}

\section{Construction of quasi-periodic solutions}\label{sezione costruzione quasi-periodiche}
We look for quasi periodic solutions $u_\omega(t, x)$, $p_\omega(t,x)$ of the equation \eqref{Eulero1}, oscillating with frequency $\omega = (\omega_1, \ldots, \omega_\nu) \in \R^\nu$, namely we look for $u_\omega(t, x) := U(\omega t, x)$, $p_\omega(t, x) := P(\omega t, x)$ where $U : \T^\nu \times \T^d \to \R^d$ and $P : \T^\nu \times \T^d \to \R$ are smooth functions. This leads to solve a functional equation for $U(\vphi, x)$, $P(\vphi, x)$ of the form 
\begin{equation}\label{Euleroqp}
\begin{cases}
\omega \cdot \partial_\vphi  U - \Delta U + U \cdot \nabla U  + \nabla P = \e f(\vphi, x) \\
\div U = 0\,.
\end{cases} 
\end{equation}
If we take the divergence of the latter equation, one gets 
\begin{equation}\label{equazione per la pressione}
\Delta P = {\rm div}\Big(\e f - U \cdot \nabla U \Big)
\end{equation}
and by projecting on the space of zero divergence vector fields, one gets a closed equation for $U$ of the form 
\begin{equation}\label{equazione chiusa per u a}
\omega \cdot \partial_\vphi  U - \Delta U + {\frak L}(U \cdot \nabla U) =   \e {\frak L}(f), \quad U(\vphi, \cdot) \in {\cal D}_0(\T^d) 
\end{equation}
where we recall the definitions \eqref{zero divergenza spazio}, \eqref{def proiettore leray}. According to the splitting \eqref{decomposizione media u vphi x} and by applying the projectors $\pi_0, \pi_0^\bot$ to the equation \eqref{equazione chiusa per u a} one gets the decoupled equations 
\begin{equation}\label{equazione mediata}
\omega \cdot \partial_\vphi U_0(\vphi) = \e f_0(\vphi)
\end{equation}
and 
\begin{equation}\label{equazione chiusa per u}
\omega \cdot \partial_\vphi  U_\bot - \Delta U_\bot + {\frak L}(U_\bot  \cdot \nabla U_\bot) =   \e {\frak L}(f_\bot)\,.
\end{equation}
Then, since $\omega $ is diophantine (see \eqref{def diofanteo}) and using that 
$$
\int_{\T^\nu} f_0(\vphi)\, d \vphi = \int_{\T^\nu \times \T^d} f(\vphi, x)\, d \vphi\, d x \stackrel{\eqref{condizioni media f}}{=} 0, 
$$
($\widehat f(0, 0) = 0$) the averaged equation \eqref{equazione mediata} can be solved explicitely by setting  
\begin{equation}\label{formula U0}
U_0(\vphi) := (\omega \cdot \partial_\vphi)^{- 1} f_0(\vphi) = \sum_{\ell \in \Z^\nu \setminus \{ 0 \}} \frac{\widehat f(\ell, 0)}{\ii \omega \cdot \ell} e^{\ii \ell \cdot \vphi}\,.
\end{equation}
By \eqref{pi 0 pi 0 bot u norma} and using \eqref{def diofanteo}, one gets the estimate 
\begin{equation}\label{stima Sobolev U0}
\| U_0 \|_\sigma \leq \e \gamma^{- 1}  \| f_0 \|_{\sigma + \nu}\, \leq \e \gamma^{- 1} \| f \|_{\sigma + \nu, 0}. 
\end{equation}
\begin{remark}[\bf Non resonance conditions]
The diophantine condition \eqref{def diofanteo} on the frequency vector $\omega$ is used only to solve the averaged equation \eqref{equazione mediata}. In order to solve the equation \eqref{equazione chiusa per u} on the space of zero average functions (w.r. to $x$) no resonance conditions are required. 
\end{remark}
We now solve the equation \eqref{equazione chiusa per u} by means of a fixed point argument. To this aim, we need to analyze some invertibility properties of the linear operator 
\begin{equation}\label{L omega}
L_\omega := \omega \cdot \partial_\vphi - \Delta\,. 
\end{equation} 
\begin{lemma}[\bf Invertibility of $L_\omega$]\label{invertibilita L omega}
Let $\sigma, s \geq 0$, $g \in H^\sigma\Big(\T^\nu, H^s_0(\T^d, \R^d) \Big)$ and assume that $g$ has zero divergence. Then there exists a unique $u  := L_\omega^{- 1} g \in H^\sigma\Big(\T^\nu, H^{s + 2}_0(\T^d, \R^d) \Big)$ with zero divergence which solves the equation $L_\omega u = g$. Moreover 
\begin{equation}\label{stima L omega inverse}
\| u \|_{\sigma, s + 2} \leq \| g \|_{\sigma, s}\,.
\end{equation}
\end{lemma}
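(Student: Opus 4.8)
The plan is to solve the equation explicitly in Fourier coordinates, exploiting the fact that the real part of each eigenvalue of $L_\omega$ is bounded below by $1$, so that no small divisors occur and the diophantine condition is irrelevant. Writing $g(\vphi, x) = \sum_{\ell, j} \widehat g(\ell, j) e^{\ii \ell \cdot \vphi} e^{\ii j \cdot x}$ as in \eqref{fourier tempo 3}, the operator $L_\omega$ acts diagonally, multiplying the coefficient indexed by $(\ell, j)$ by $\ii \omega \cdot \ell + |j|^2$. Since $g \in H^\sigma(\T^\nu, H^s_0(\T^d, \R^d))$ has zero average in $x$, we have $\widehat g(\ell, 0) = 0$ for all $\ell$, so only indices with $j \neq 0$ occur; for these $|j| \geq 1$, hence $\langle j \rangle = |j|$ and the multiplier never vanishes. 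I would therefore define the candidate solution by
$$\widehat u(\ell, j) := \frac{\widehat g(\ell, j)}{\ii \omega \cdot \ell + |j|^2}, \quad j \neq 0, \qquad \widehat u(\ell, 0) := 0.$$

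The core estimate is the uniform lower bound $|\ii \omega \cdot \ell + |j|^2| = ((\omega \cdot \ell)^2 + |j|^4)^{1/2} \geq |j|^2 = \langle j \rangle^2$, valid for $j \neq 0$. This yields $|\widehat u(\ell, j)| \leq \langle j \rangle^{-2} |\widehat g(\ell, j)|$, and inserting this into the explicit form of the norm \eqref{norma sobolev mista} gives $\langle j \rangle^{2(s+2)} |\widehat u(\ell, j)|^2 \leq \langle j \rangle^{2s} |\widehat g(\ell, j)|^2$, whence \eqref{stima L omega inverse} after summing over $(\ell, j)$. The two factors of $\langle j \rangle$ gained are precisely the announced gain of two space derivatives, and the same computation shows $u \in H^\sigma(\T^\nu, H^{s+2}_0(\T^d, \R^d))$.

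It then remains to verify that $u$ lies in the correct class and is the unique solution. Zero average in $x$ holds by the choice $\widehat u(\ell, 0) = 0$. Zero divergence is inherited from $g$: in Fourier variables $\div$ amounts to contraction with $\ii j$, and since $j \cdot \widehat g(\ell, j) = 0$ the same holds for $\widehat u(\ell, j)$, the two coefficients being proportional. That $u$ is real-valued follows from the reality condition $\widehat g(-\ell, -j) = \overline{\widehat g(\ell, j)}$ together with $\ii \omega \cdot (-\ell) + |-j|^2 = \overline{\ii \omega \cdot \ell + |j|^2}$, which propagate to $\widehat u$. Finally, applying $L_\omega$ term by term shows $L_\omega u = g$, while uniqueness is immediate: any solution in the stated space must satisfy $(\ii \omega \cdot \ell + |j|^2) \widehat u(\ell, j) = \widehat g(\ell, j)$ coefficientwise, and the multiplier being nonzero forces the formula above.

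I expect no genuine obstacle here: the whole point, as stressed in the introduction, is that parabolic dissipation makes $\mathrm{Re}(\ii \omega \cdot \ell + |j|^2) = |j|^2 \geq 1$ bounded away from zero, so the argument reduces to a direct, small-divisor-free Fourier computation. The only points requiring a little care are the bookkeeping that the solution remains within the divergence-free, zero-average, real-valued subspace.
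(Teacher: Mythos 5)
Your proposal is correct and follows essentially the same route as the paper: diagonalize $L_\omega$ in the Fourier basis \eqref{fourier tempo 3}, use the lower bound $|\ii \omega \cdot \ell + |j|^2| = \sqrt{|\omega\cdot\ell|^2 + |j|^4} \geq |j|^2 = \langle j\rangle^2$ for $j \neq 0$ (which is exactly \eqref{lower bounds divisori}), and sum in the norm \eqref{norma sobolev mista} to get \eqref{stima L omega inverse} with the gain of two derivatives. Your additional remarks on reality, uniqueness, and preservation of the divergence-free condition are correct and only make explicit what the paper treats briefly.
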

\begin{proof}
By \eqref{fourier tempo 3}, we can write
$$
L_\omega u(\vphi, x) = \sum_{\ell \in \Z^\nu} \sum_{j \in \Z^3 \setminus \{ 0 \}} \big(\ii \omega \cdot \ell + |j|^2 \big)\widehat u(\ell, j) e^{\ii \ell \cdot \vphi} e^{\ii j \cdot x}\,.
$$
Note that since $j \neq 0$, one has that 
\begin{equation}\label{lower bounds divisori}
|\ii \omega \cdot \ell + |j|^2| = \sqrt{|\omega \cdot \ell|^2 + |j|^4} \geq |j|^2\,. 
\end{equation}
Hence, the equation $L_\omega u = g$ admits the unique solution with zero space average given by 
\begin{equation}\label{def L omega inv nella dim}
u (\vphi, x) := L_\omega^{- 1} g(\vphi, x) =  \sum_{\ell \in \Z^\nu} \sum_{j \in \Z^d \setminus \{ 0 \}} \dfrac{\widehat g(\ell, j)}{\ii \omega \cdot \ell + |j|^2} e^{\ii \ell \cdot \vphi} e^{\ii j \cdot x}
\end{equation}
Clearly if ${\rm div}(g) = 0$ and then also ${\rm div}(u) = 0$. We now estimate $\| u \|_{\sigma, s + 2}$. According to \eqref{norma sobolev mista}, \eqref{def L omega inv nella dim},  one has 
$$
\begin{aligned}
\| u \|_{\sigma, s + 2}^2 &= \sum_{\ell \in \Z^\nu} \sum_{j \in \Z^d \setminus \{ 0 \}} \langle \ell \rangle^{2 \sigma} \langle j \rangle^{2(s + 2)}\dfrac{|\widehat g(\ell, j)|^2}{|\ii \omega \cdot \ell + |j|^2|^2} \\
& \stackrel{\eqref{lower bounds divisori}}{\leq} \sum_{\ell \in \Z^\nu} \sum_{j \in \Z^d \setminus \{ 0 \}} \langle \ell \rangle^{2 \sigma} | j |^{2(s + 2)} |j|^{- 4}|\widehat g(\ell, j)|^2 = \| g \|_{\sigma, s}^2
\end{aligned}
$$
which proves the claimed statement. 
\end{proof}
We now implement the fixed point argument for the equation \eqref{equazione chiusa per u} (to simplify notations we write $U$ instead of $U_\bot$). For any $\sigma, s, R \geq 0$, we define the ball 
\begin{equation}\label{palla punto fisso}
{\cal B}_{\sigma, s}(R) := \Big\{ U \in H^\sigma\Big(\T^\nu, H^s_0(\T^d, \R^d) \Big) : {\rm div}(U) = 0\,, \quad  \| U \|_{\sigma, s} \leq R\Big\}\,. 
\end{equation}
and we define the nonlinear operator 
\begin{equation}\label{mappa punto fisso}
\Phi(U) := L_\omega^{- 1} {\frak L}\Big(\e f - U \cdot \nabla U \Big), \quad U  \in {\cal B}_{\sigma, s}(R)\,. 
\end{equation}
The following Proposition holds.
\begin{proposition}[\bf Contraction for $\Phi$]\label{contrazione quasi periodica}
Let $\sigma > \nu/2$, $s  > d/2 + 1$, $f \in {\cal C}^N(\T^\nu \times \T^d, \R^d)$, $N > \sigma + s - 2$. Then there exists a constant $C_* = C_*(f , \sigma, s) > 0$ large enough and $\e_0= \e_0(f , \sigma, s) \in (0, 1)$ small enough, such that for any $\e \in (0,  \e_0)$, the map $\Phi : {\cal B}_{\sigma, s}(C_* \e) \to {\cal B}_{\sigma, s}(C_* \e)$ is a contraction. 
\end{proposition}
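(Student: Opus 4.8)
The plan is to apply the Banach fixed point theorem to $\Phi$ on the closed ball $\mathcal B_{\sigma, s}(C_*\e)$, which is a complete metric space (a closed subset of the Hilbert space $H^\sigma(\T^\nu, H^s(\T^d,\R^d))$, the constraints $\mathrm{div}=0$ and zero $x$-average being closed), with the radius $C_*\e$ and the threshold $\e_0$ fixed only at the end. The whole argument rests on a single gain-of-derivatives bookkeeping: the quadratic term $U\cdot\nabla U$ costs one space derivative, while $L_\omega^{-1}$ gains two (Lemma \ref{invertibilita L omega}), so there is a net surplus of one derivative which, combined with the monotonicity $\|u\|_{\sigma, s'}\le\|u\|_{\sigma, s}$ for $s'\le s$ (immediate from \eqref{norma sobolev mista}, since $\langle j\rangle\ge 1$), lets one close every estimate at the single regularity level $s$.

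First I would check that $\Phi$ is well defined on $\mathcal B_{\sigma, s}(C_*\e)$, i.e. that $\Phi(U)$ is again divergence free with zero $x$-average. Divergence freeness is automatic since $L_\omega^{-1}$ maps into divergence-free fields (Lemma \ref{invertibilita L omega}) while its argument ${\frak L}(\e f - U\cdot\nabla U)$ is already in the range of the Leray projector. The zero $x$-average is built into the definition \eqref{def L omega inv nella dim} of $L_\omega^{-1}$, whose sum runs over $j\neq 0$; equivalently one feeds $L_\omega^{-1}$ only the $\pi_0^\bot$ component, which by \eqref{commutatore Leray fourier multiplier} and Lemma \ref{prop nonlinearita per quasi periodiche}$(i)$ equals ${\frak L}(\e f_\bot - U\cdot\nabla U)$.

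Next comes the self-mapping estimate. Splitting $\Phi(U)$ into its linear and quadratic parts and using Lemma \ref{invertibilita L omega} (with the regularity index shifted appropriately) together with the boundedness of the Leray projector \eqref{boundedness Leray projector}, I would bound
\begin{equation*}
\|\Phi(U)\|_{\sigma, s} \le \e\,\|L_\omega^{-1}{\frak L} f\|_{\sigma, s} + \|L_\omega^{-1}{\frak L}(U\cdot\nabla U)\|_{\sigma, s} \lesssim \e\,\|f\|_{\sigma, s-2} + \|U\cdot\nabla U\|_{\sigma, s-1}\,.
\end{equation*}
For the linear term I use the full two-derivative gain and the fact that $f\in\mathcal C^N$ with $N>\sigma+s-2$ makes $\|f\|_{\sigma, s-2}\lesssim\|f\|_{H^{\sigma+s-2}}\lesssim\|f\|_{\mathcal C^N}$ finite. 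For the quadratic term I first pass to the level $s+1$ by monotonicity and then gain two derivatives, landing at level $s-1$; since $s-1>d/2$, Lemma \ref{prop nonlinearita per quasi periodiche}$(ii)$ gives $\|U\cdot\nabla U\|_{\sigma, s-1}\lesssim_{\sigma, s}\|U\|_{\sigma, s}\|U\|_{\sigma, s}$. Hence $\|\Phi(U)\|_{\sigma, s}\le C_1\e\|f\|_{\sigma, s-2}+C_2\|U\|_{\sigma, s}^2$, and choosing $C_*\ge 2C_1\|f\|_{\sigma, s-2}$ and $\e_0$ so small that $C_2 C_*\e_0\le 1/2$ forces $\|\Phi(U)\|_{\sigma, s}\le C_*\e$.

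Finally, the contraction estimate proceeds identically after writing $U\cdot\nabla U-V\cdot\nabla V = U\cdot\nabla(U-V)+(U-V)\cdot\nabla V$; the $\e f$ term cancels and the same chain of inequalities yields
\begin{equation*}
\|\Phi(U)-\Phi(V)\|_{\sigma, s} \lesssim_{\sigma, s} \big(\|U\|_{\sigma, s}+\|V\|_{\sigma, s}\big)\,\|U-V\|_{\sigma, s} \le 2 C_3 C_*\e\,\|U-V\|_{\sigma, s}\,,
\end{equation*}
so shrinking $\e_0$ once more to make $2C_3 C_*\e_0<1$ gives the contraction. I expect the only genuinely delicate point to be the derivative-counting: one must spend the single net derivative gained from $L_\omega^{-1}$ so that the product estimate of Lemma \ref{prop nonlinearita per quasi periodiche}$(ii)$ is applied at a level strictly above $d/2$ (which is exactly where the hypothesis $s>d/2+1$ enters) and so that the forcing is measured in a norm controlled by $\|f\|_{\mathcal C^N}$ under $N>\sigma+s-2$; everything else is routine.
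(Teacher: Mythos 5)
Your proposal is correct and follows essentially the same route as the paper's proof: the smoothing estimate of Lemma \ref{invertibilita L omega} combined with the Leray-projector bound \eqref{boundedness Leray projector}, the product estimate of Lemma \ref{prop nonlinearita per quasi periodiche}-$(ii)$ applied at level $s-1>d/2$, the bound $\| f \|_{\sigma, s-2} \lesssim \| f \|_{{\cal C}^N}$ from $N>\sigma+s-2$, the same telescoping of the quadratic difference, and the same choice of $C_*$ and $\e_0$. Your added remarks (completeness of the ball, that $L_\omega^{-1}$ effectively acts on the $\pi_0^\bot$ component) merely make explicit points the paper leaves implicit.
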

\begin{proof}
Let $U \in {\cal B}_{\sigma, s} (C_* \e)$. We apply Lemmata \ref{prop nonlinearita per quasi periodiche}-$(i)$, \ref{invertibilita L omega} from which one immediately deduces that 
\begin{equation}\label{parco lambro 0}
\int_{\T^3 } \Phi(U)\, d x = 0, \quad {\rm div}\big(\Phi(U) \big) = 0\,.
\end{equation}
Moreover 
$$
\begin{aligned}
\| \Phi(U) \|_{\sigma, s} & = \Big\| L_\omega^{- 1} {\frak L}\Big(\e f - U \cdot \nabla U \Big) \Big\|_{\sigma, s} \stackrel{\eqref{boundedness Leray projector}, \eqref{stima L omega inverse}}{\lesssim} \Big\| \e f - U \cdot \nabla U \Big\|_{\sigma , s - 2} \\
& \lesssim \e \| f \|_{\sigma, s - 2} + \| U \cdot \nabla U \|_{\sigma, s - 1} \,.
\end{aligned}
$$
Note that since $f \in {\cal C}^N$ with $N > \sigma + s - 2$, one has that $\| f \|_{\sigma, s - 2} \lesssim \| f \|_{{\cal C}^N}$. In view of Lemma \ref{prop nonlinearita per quasi periodiche}-$(ii)$, using that $\sigma > \nu/2$, $s - 1 > d/2$, one gets that 
$$
\| \Phi(U ) \|_{\sigma, s} \leq C(f, s, \sigma) \big(\e +  \| U \|_{\sigma, s - 1}\| U \|_{\sigma, s} \big) \leq C(f, s, \sigma) \big(\e + \| U \|_{\sigma, s}^2 \big)
$$
for some constant $C(f, s, \sigma) > 0$. Using that $\| U \|_{\sigma, s} \leq C_* \e$, one gets that 
$$
\| \Phi(U ) \|_{\sigma, s} \leq C(f, s, \sigma) \e + C(f, s, \sigma) C_*^2 \e^2 \leq C_* \e
$$
provided 
$$
C_* \geq 2 C(f, s, \sigma)\quad \text{and} \quad \e \leq \frac{1}{2 C(f , s, \sigma) C_*}\,. 
$$
Hence $\Phi : {\cal B}_{\sigma, s}(C_* \e) \to {\cal B}_{\sigma, s}(C_* \e)$. Now let $U_1, U_2 \in {\cal B}_{\sigma, s}(C_* \e)$ and we estimate 
$$
\Phi(U_1) - \Phi(U_2) = L_\omega^{- 1} {\frak L} \Big( U_1 \cdot \nabla U_1 - U_2 \cdot \nabla U_2 \Big)\,. 
$$
One has that 
$$
\begin{aligned}
\| \Phi(U_1) - \Phi(U_2) \|_{\sigma, s} & \leq \Big\| L_\omega^{- 1} {\frak L} \Big(( U_1 - U_2)\cdot \nabla U_1 \Big) \Big\|_{\sigma, s} + \Big\|L_\omega^{- 1} {\frak L} \Big(U_2\cdot \nabla (U_1 - U_2) \Big) \Big\|_{\sigma, s}  \\
& \stackrel{\eqref{boundedness Leray projector}, \eqref{stima L omega inverse}\,,Lemma\, \ref{prop nonlinearita per quasi periodiche}}{\lesssim_{s, \sigma}} \| U_1 - U_2 \|_{\sigma, s } \| U_1 \|_{\sigma, s } + \| U_1 - U_2 \|_{\sigma, s } \| U_2 \|_{\sigma, s }  \\
& \leq C(s, \sigma)\big( \| U_1 \|_{\sigma, s} + \| U_2 \|_{\sigma, s} \big) \| U_1 - U_2 \|_{\sigma, s}
\end{aligned}
$$
for some constant $C(s, \sigma) > 0$. Since $U_1, U_2 \in {\cal B}_{\sigma, s}(C_* \e)$, one then has that 
$$
\| \Phi(U_1) - \Phi(U_2) \|_{\sigma, s} \leq 2 C(s, \sigma) C_* \e \| U_1 - U_2 \|_{\sigma, s} \leq \frac12 \| U_1 - U_2 \|_{\sigma, s}
$$
provided $\e \leq \frac{1}{4 C(s, \sigma) C_*}\,.$ Hence $\Phi$ is a contraction. 
\end{proof}
\subsection{Proof of Theorem \ref{esistenza quasi-periodiche}}
Proposition \ref{contrazione quasi periodica} implies that for $\sigma > \nu /2$, $s > \frac{d}{2} + 1$, there exists a unique $U_\bot \in H^\sigma(\T^\nu, H^s_0(\T^d, \R^d))$, $\| U_\bot \|_{\sigma, s} \lesssim_{\sigma, s} \e$ which is a fixed point of the map $\Phi$ defined in \eqref{mappa punto fisso}. We fix $\sigma := \nu/2 + 2$ and $N > \frac{3 \nu}{2} + s + 2 $. By the Sobolev embedding property \eqref{sobolev embedding 1}, since $\sigma - 1 > \nu/2$, one gets that 
\begin{equation}\label{U C1 teo qp}
\begin{aligned}
U_\bot \in {\cal C}^1_b\Big(\T^\nu, H^s_0(\T^d, \R^d) \Big), \quad \| U_\bot \|_{{\cal C}^1_\vphi H^s_x} \lesssim_s \e
\end{aligned}
\end{equation}  and $U_\bot$ is a classical solution of the equation \eqref{equazione chiusa per u}. Similarly, by recalling \eqref{equazione mediata}, \eqref{formula U0}, \eqref{stima Sobolev U0}, one gets that 
\begin{equation}\label{U0 C1 teo qp}
U_0 \in {\cal C}^1 (\T^\nu, \R^d), \quad \| U_0 \|_{{\cal C}^1_\vphi} \leq \e \gamma^{- 1} \| f \|_{\frac{3 \nu}{2} + 2, 0} \leq \e \gamma^{- 1} \| f \|_{{\cal C}^N}, \quad \int_{\T^\nu} U_0(\vphi)\, d \vphi = 0
\end{equation} 
and $U_0$ is a classical solution of the equation \eqref{equazione mediata}. Hence $U = U_0 + U_\bot \in {\cal C}^1\Big( \T^\nu, H^s(\T^d, \R^d) \Big)$ is a classical solution of \eqref{equazione chiusa per u a} and it satisfies $\int_{\T^\nu \times \T^d} U(\vphi, x)\, d \vphi\, d x = 0$. The unique solution with zero average in $x$ of the equation \eqref{equazione per la pressione} is given by
$$
P:=   (- \Delta)^{- 1}{\rm div}\Big( U \cdot \nabla U - \e f \Big)\,. 
$$
Hence, $P \in {\cal C}^0_b\Big(\T^\nu, H^s_0(\T^d, \R^d) \Big)$ and
$$
\begin{aligned}
\| P \|_{{\cal C}^0_\vphi H^s_x} & \stackrel{\eqref{sobolev embedding 1}, \sigma = \nu/2 + 2}{\lesssim}\| P \|_{\sigma, s}  \lesssim_{\sigma, s} \e \| f \|_{\sigma, s - 1} + \| U \cdot \nabla U \|_{\sigma , s - 1}   \\
& \stackrel{Lemma\, \ref{prop nonlinearita per quasi periodiche}}{\lesssim_{\sigma, s}} \e \| f \|_{\sigma, s - 1} + \| U \|_{\sigma, s}^2\,. 
\end{aligned}
$$
The claimed estimate on $P$ then follows since $ \| f \|_{\sigma, s - 1}  \lesssim \| f \|_{{\cal C}^N}$, $\| U \|_{\sigma, s} \leq C_* \e$. Note that if $f$ has zero average in $x$, one has that 
$$
f_0(\vphi) = \pi_0 f(\vphi) = \frac{1}{(2 \pi)^d} \int_{\T^d} f(\vphi, x)\, d x = 0\,, \quad \forall \vphi \in \T^\nu\,.  
$$
The equation \eqref{equazione mediata} reduces to $\omega \cdot \partial_\vphi U_0 = 0$. Hence the only solution $U = U_0 + U_\bot$ of \eqref{equazione chiusa per u a} with zero average in $x$ is the one where we choose $U_0 = 0$ and hence $U = U_\bot$. The claimed statement has then been proved. 
\section{Orbital and asymptotic stability}\label{sezione stabilita asintotica}
We now want to study the Cauchy problem for the equation \eqref{Eulero1} for initial data which are close to the quasi-periodic solution $(u_\omega, p_\omega)$, where 
\begin{equation}\label{u omega t p omega t}
u_\omega (t, x) : = U(\omega t, x), \quad p_\omega (t, x) := P(\omega t, x)
\end{equation}
and the periodic functions $U \in {\cal C}^1\Big(\T^\nu, H^s(\T^d, \R^d) \Big)$, $P\in {\cal C}^0\Big(\T^\nu, H^s_0(\T^d, \R^d) \Big)$ are given by Theorem \ref{esistenza quasi-periodiche}. We then look for solutions which are perturbations of the quasi-periodic ones $(u_\omega, p_\omega)$, namely we look for solutions of the form 
\begin{equation}\label{ansatz perturbazione quasi-periodiche}
u(t, x) = u_\omega (t, x) + v( t, x), \quad p(t, x) = p_\omega( t, x) + q(t, x)\,.
\end{equation}
Plugging the latter ansatz into the equation \eqref{Eulero1}, one obtains an equation for $v(t, x)$, $q(t, x)$ of the form 
\begin{equation}\label{NS per stabilita asintotica}
\begin{cases}
\partial_t v - \Delta v + u_\omega \cdot \nabla v + v \cdot \nabla u_\omega + v \cdot \nabla v + \nabla q = 0 \\
{\rm div}(v) = 0\,. 
\end{cases}
\end{equation}
If we take the divergence in the latter equation we get the equation for the pressure $q(t, x)$
\begin{equation}\label{eq per pressione stabilita}
- \Delta q =  {\rm div}\Big( u_\omega \cdot \nabla v + v \cdot \nabla u_\omega + v \cdot \nabla v\Big)\,. 
\end{equation}
By using the Leray projector defined in \eqref{def proiettore leray}, we then get a closed equation for $v$ of the form 
\begin{equation}\label{eq per u stab asintotica}
\begin{cases}
\partial_t v - \Delta v + {\frak L}\Big(u_\omega \cdot \nabla v + v \cdot \nabla u_\omega + v \cdot \nabla v \Big)= 0 \\
{\rm div}(v) = 0\,.
\end{cases}
\end{equation}
We prove the following 
\begin{proposition}\label{proposizione stabilita asintotica con Leray}
Let $s > d/2 + 1$, $\alpha \in (0, 1)$. Then there exists $\delta = \delta(s, \alpha,  d, \nu)\in (0, 1)$ small enough and $C = C(s, \alpha, d, \nu) > 0$ large enough, such that for any $\e \in (0, \delta)$ and for any initial datum $v_0 \in H^s_0(\T^d, \R^d)$ with $\| v_0 \|_{H^s_x} \leq \delta$, there exists a unique global classical solution 
\begin{equation}\label{u regolarita}
v \in {\cal C}^0_b \Big( [0, + \infty), H^s_0(\T^d, \R^d) \Big) \cap {\cal C}^1_b\Big( [0, + \infty), H^{s - 2}_0(\T^d, \R^d) \Big)
\end{equation}
of the equation \eqref{eq per u stab asintotica} which satisfies 
\begin{equation}\label{stima asintotica 1}
\| v (t, \cdot) \|_{H^s_x}, \| \partial_t v (t, \cdot )\|_{H^{s - 2}_x}\leq C\delta e^{- \alpha t}, \quad \forall t \geq 0\,. 
\end{equation}
\end{proposition}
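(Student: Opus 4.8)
\noindent
The plan is to recast the Cauchy problem for \eqref{eq per u stab asintotica} as a fixed point problem in the exponentially weighted space ${\cal E}_s$ of \eqref{def cal Es}, and to solve it by the Banach fixed point theorem, the two dispersive estimates \eqref{bla bla bla 0}--\eqref{bla bla bla 1} (i.e. Lemma \ref{lemma propagatore libero calore} and Proposition \ref{stima dispersiva principale}) being the analytic engine. By Duhamel's principle, a field $v$ solving \eqref{eq per u stab asintotica} with $v(0, \cdot) = v_0$ is precisely a fixed point of
\begin{equation}\label{mappa punto fisso stabilita asintotica}
\Phi(v)(t) := e^{t \Delta} v_0 - \int_0^t e^{(t - \tau) \Delta}\, {\frak L}\Big( u_\omega \cdot \nabla v + v \cdot \nabla u_\omega + v \cdot \nabla v \Big)(\tau, \cdot)\, d \tau \,.
\end{equation}
Since $v_0$ is divergence free and of zero average, and since ${\frak L}$ and $e^{t \Delta}$ are Fourier multipliers that commute (see \eqref{commutatore Leray fourier multiplier}) and preserve both constraints, $\Phi$ maps divergence free, zero-average fields into fields of the same type; in particular $e^{t \Delta}$ acts on them with the decay of Lemma \ref{lemma propagatore libero calore}. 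I would look for the fixed point in the ball $\{ v \in {\cal E}_s : \| v \|_{{\cal E}_s} \leq 2 \delta \}$.

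\noindent
The next step is to estimate the nonlinearity ${\cal N}(v) := {\frak L}\big( u_\omega \cdot \nabla v + v \cdot \nabla u_\omega + v \cdot \nabla v \big)$ in ${\cal E}_{s - 1}$. From Theorem \ref{esistenza quasi-periodiche} together with Lemma \ref{piccolo lemma quasi periodiche} one has $\sup_{t \geq 0} \| u_\omega(t, \cdot) \|_{H^s_x} \lesssim \e$. Using the boundedness \eqref{boundedness Leray projector} of ${\frak L}$, the algebra property of $H^{s - 1}(\T^d)$ (Lemma \ref{interpolazione sobolev 0}, applicable since $s - 1 > d/2$) and $\| \nabla w \|_{H^{s - 1}_x} \lesssim \| w \|_{H^s_x}$, the terms linear in $v$ are bounded pointwise in time by $\lesssim \e \| v(t, \cdot) \|_{H^s_x}$ and the quadratic one by $\lesssim \| v(t, \cdot) \|_{H^s_x}^2$. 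Multiplying by $e^{\alpha t}$ and taking the supremum over $t$, the decisive point is that for the quadratic term $e^{\alpha t} \| v(t, \cdot) \|_{H^s_x}^2 = e^{- \alpha t}\big( e^{\alpha t} \| v(t, \cdot) \|_{H^s_x} \big)^2 \leq \| v \|_{{\cal E}_s}^2$, so that
\begin{equation}\label{stima nonlinearita plan}
\| {\cal N}(v) \|_{{\cal E}_{s - 1}} \lesssim \e\, \| v \|_{{\cal E}_s} + \| v \|_{{\cal E}_s}^2 \,.
\end{equation}
Writing $v_1 \cdot \nabla v_1 - v_2 \cdot \nabla v_2 = (v_1 - v_2) \cdot \nabla v_1 + v_2 \cdot \nabla(v_1 - v_2)$ yields in the same way the Lipschitz bound $\| {\cal N}(v_1) - {\cal N}(v_2) \|_{{\cal E}_{s - 1}} \lesssim \big( \e + \| v_1 \|_{{\cal E}_s} + \| v_2 \|_{{\cal E}_s} \big) \| v_1 - v_2 \|_{{\cal E}_s}$.

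\noindent
I would then bound the two terms of \eqref{mappa punto fisso stabilita asintotica} in ${\cal E}_s$. For the free evolution I would invoke Lemma \ref{lemma propagatore libero calore}-$(i)$ (the estimate \emph{without} gain of derivatives), which on zero-average data gives the strong decay $\| e^{t \Delta} v_0 \|_{H^s_x} \lesssim e^{- t} \| v_0 \|_{H^s_x}$; since $\alpha < 1$ this implies $\| e^{\cdot \Delta} v_0 \|_{{\cal E}_s} \lesssim \| v_0 \|_{H^s_x} \leq \delta$. Here the gain estimate \eqref{bla bla bla 0} cannot be used, as its factor $t^{- n/2}$ is not integrable at $t = 0$; the point of keeping $v_0 \in H^s$ is exactly that no derivative need be gained on the free part. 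For the Duhamel integral I would apply \eqref{bla bla bla 1} with $f = {\cal N}(v) \in {\cal E}_{s - 1}$, getting a bound $\lesssim \| {\cal N}(v) \|_{{\cal E}_{s - 1}}$ in ${\cal E}_s$. Combining with \eqref{stima nonlinearita plan} gives $\| \Phi(v) \|_{{\cal E}_s} \leq \delta + C\big( \e \| v \|_{{\cal E}_s} + \| v \|_{{\cal E}_s}^2 \big)$ and the corresponding Lipschitz estimate. On the ball of radius $2 \delta$, recalling $\e \leq \delta$ and taking $\delta$ small enough, these show that $\Phi$ maps the ball into itself and is a contraction; the Banach fixed point theorem then gives a unique $v \in {\cal E}_s$ with $\| v \|_{{\cal E}_s} \leq 2 \delta$, i.e. $\| v(t, \cdot) \|_{H^s_x} \leq 2 \delta e^{- \alpha t}$, which is the first bound in \eqref{stima asintotica 1}.

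\noindent
Finally I would promote the mild solution to a classical one and extract the time regularity. Strong continuity of $e^{t \Delta}$ and of the Duhamel term in \eqref{mappa punto fisso stabilita asintotica} give $v \in {\cal C}^0_b([0, + \infty), H^s_0(\T^d, \R^d))$, so that ${\cal N}(v)(t, \cdot)$ is a continuous $H^{s - 1}_0$-valued map; then, from the equation itself $\partial_t v = \Delta v - {\cal N}(v)$, with $\| \Delta v(t, \cdot) \|_{H^{s - 2}_x} \leq \| v(t, \cdot) \|_{H^s_x}$ and $\| {\cal N}(v)(t, \cdot) \|_{H^{s - 2}_x} \leq \| {\cal N}(v)(t, \cdot) \|_{H^{s - 1}_x}$ controlled through \eqref{stima nonlinearita plan}, one reads off $v \in {\cal C}^1_b([0, + \infty), H^{s - 2}_0(\T^d, \R^d))$ and $\| \partial_t v(t, \cdot) \|_{H^{s - 2}_x} \lesssim \delta e^{- \alpha t}$, completing \eqref{stima asintotica 1}. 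I expect the main difficulty to be exactly the closing of the nonlinear estimate \eqref{stima nonlinearita plan} in the single weighted norm $\| \cdot \|_{{\cal E}_s}$: the terms linear in $v$ must be absorbed through the smallness of $\e$, whereas the quadratic term is controlled only because the product of two $e^{- \alpha t}$-decaying factors produces an extra $e^{- \alpha t}$, and because the dispersive bound \eqref{bla bla bla 1} regains precisely the one derivative lost to $\nabla v$ while preserving the decay rate; balancing these against the constants $C(\alpha)$ that blow up as $\alpha \to 1$ is what forces $\alpha \in (0, 1)$ and $\delta$ small.
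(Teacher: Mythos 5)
Your proposal is correct and follows essentially the same route as the paper: the same Duhamel fixed point map $\Phi$ in the weighted space ${\cal E}_s$, the same product estimates closing $\| {\cal N}(v) \|_{{\cal E}_{s-1}} \lesssim \e \| v \|_{{\cal E}_s} + \| v \|_{{\cal E}_s}^2$, the smoothing estimate of Proposition \ref{stima dispersiva principale} for the integral term, and recovery of the ${\cal C}^1_b H^{s-2}$ regularity directly from the equation. The only (harmless) variation is that you bound the free evolution directly via Lemma \ref{lemma propagatore libero calore}-$(i)$ and the factor $e^{-(1-\alpha)t} \leq 1$, which is in fact slightly simpler than the paper's Lemma \ref{stima calore in E sn}, whose proof splits $t \in [0,1]$ and $t > 1$.
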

The Proposition above will be proved by a fixed point argument in some weighted Sobolev spaces which take care of the decay in time of the solutions we are looking for. In the next section we shall exploit some decay estimates of the linear heat propagator which will be used in the proof of our result. 
\subsection{Dispersive estimates for the heat propagator}\label{stime dispersive propagatore calore}
In this section we analyze some properties of the heat propagator. We recall that the heat propagator is defined as follows. Consider the Cauchy problem for the heat equation 
\begin{equation}\label{calore}
\begin{cases}
\partial_t u - \Delta u = 0 \\
u(0, x) = u_0(x),
\end{cases}
\quad u_0 \in H^s_0(\T^d, \R^d)\,.
\end{equation}
It is well known that there exists a unique solution 
$$
u \in {\cal C}_b^0\Big([0, + \infty), H^s_0(\T^d, \R^d) \Big) \cap {\cal C}_b^1\Big([0, + \infty), H^{s - 2}_0(\T^d, \R^d) \Big)
$$
 which can be written as $u(t, x) := e^{t \Delta} u_0(x)$, namely  
\begin{equation}\label{heat propagator}
u(t, x) = e^{t \Delta} u_0(x) = \sum_{\xi \in \Z^d \setminus \{ 0 \}} e^{- t |\xi|^2} \widehat u_0(\xi) e^{\ii x \cdot \xi}\,. 
\end{equation}
\begin{lemma}\label{lemma propagatore libero calore}

\noindent
$(i)$  Let $u_0 \in H^s_0(\T^d, \R^d)$. Then
\begin{equation}\label{stima banale propagatore calore}
\| e^{t \Delta} u_0 \|_{H^s_x} \leq e^{- t} \| u_0 \|_{H^s_x}, \quad \forall t \geq 0\,.
\end{equation}

\noindent
$(ii)$ Let $u_0 \in H^{s - 1}_0(\T^d, \R^d)$. Then, for any integer $n \geq 1$ and for any $\alpha \in (0, 1)$, 
\begin{equation}\label{stima smoothing propagatore calore}
\| e^{t \Delta} u_0 \|_{H^{s }_x} \lesssim_n t^{- \frac{n}{2}} (1 - \alpha)^{- \frac{n}{2}} e^{- \alpha t}\| u_0 \|_{H^{s - n}_x} \lesssim_n t^{- \frac{n}{2}} (1 - \alpha)^{- \frac{n}{2}} e^{- \alpha t} \| u_0 \|_{H^{s - 1}_x}, \quad \forall t > 0\,. 
\end{equation}
\end{lemma}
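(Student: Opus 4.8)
The plan is to derive both estimates directly from the explicit Fourier representation \eqref{heat propagator} of the propagator, exploiting the fact that $u_0$ has zero average. Since the sum defining $e^{t\Delta} u_0$ runs only over $\xi \in \Z^d \setminus \{0\}$, every frequency satisfies $|\xi| \geq 1$, hence $\langle \xi \rangle = |\xi|$ and $|\xi|^2 \geq 1$; this spectral gap is what produces the exponential time decay. For part $(i)$ I would simply write, by \eqref{heat propagator} and \eqref{norma sobolev 0},
$$
\| e^{t\Delta} u_0 \|_{H^s_x}^2 = \sum_{\xi \in \Z^d \setminus \{0\}} \langle \xi \rangle^{2s} e^{-2 t |\xi|^2} |\widehat u_0(\xi)|^2 ,
$$
and bound $e^{-2t|\xi|^2} \leq e^{-2t}$ using $|\xi|^2 \geq 1$, which gives $(i)$ after taking the square root.

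For part $(ii)$ the idea is to split the exponential into a decay factor and a smoothing factor. I would write $e^{-2t|\xi|^2} = e^{-2\alpha t |\xi|^2}\, e^{-2(1-\alpha) t |\xi|^2}$ and again use $|\xi|^2 \geq 1$ on the first piece to extract $e^{-2\alpha t |\xi|^2} \leq e^{-2\alpha t}$. The whole matter then reduces to estimating, for each $\xi$, the remaining weight $\langle \xi \rangle^{2n}\, e^{-2(1-\alpha) t |\xi|^2} = |\xi|^{2n}\, e^{-2(1-\alpha) t |\xi|^2}$.

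Here the one technical ingredient is the elementary inequality $\sup_{y \geq 0} y^n e^{-y} = (n/e)^n$. Setting $y = 2(1-\alpha) t |\xi|^2$ gives
$$
|\xi|^{2n}\, e^{-2(1-\alpha) t |\xi|^2} = \frac{y^n e^{-y}}{\big(2(1-\alpha) t\big)^n} \lesssim_n (1-\alpha)^{-n}\, t^{-n},
$$
uniformly in $\xi$. Combining these bounds, I would estimate
$$
\langle \xi \rangle^{2s} e^{-2t|\xi|^2} \lesssim_n (1-\alpha)^{-n}\, t^{-n}\, e^{-2\alpha t}\, \langle \xi \rangle^{2(s-n)},
$$
insert this into the Fourier series for $\| e^{t\Delta} u_0 \|_{H^s_x}^2$, and recognize the remaining sum as $\| u_0 \|_{H^{s-n}_x}^2$; taking the square root yields the first inequality of $(ii)$. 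The second inequality is just the monotonicity $\| u_0 \|_{H^{s-n}_x} \leq \| u_0 \|_{H^{s-1}_x}$, valid for $n \geq 1$ because $\langle \xi \rangle^{2(s-n)} \leq \langle \xi \rangle^{2(s-1)}$ when $\langle \xi \rangle \geq 1$.

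I do not expect a genuine obstacle: the argument is a diagonal computation in Fourier space. The only point requiring care is the exponential splitting, which is precisely what forces the constant $(1-\alpha)^{-n/2}$ and hence the degeneration as $\alpha \to 1$ — one cannot keep the full $e^{-t}$ decay while also gaining derivatives, since the smoothing must borrow from the decay budget carried by $e^{-2(1-\alpha)t|\xi|^2}$.
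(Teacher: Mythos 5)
Your proof is correct and follows essentially the same route as the paper: the identical Fourier-side splitting $e^{-2t|\xi|^2}=e^{-2\alpha t|\xi|^2}e^{-2(1-\alpha)t|\xi|^2}$, with the spectral gap $|\xi|\geq 1$ giving the decay and the elementary bound $\sup_{y\geq 0} y^n e^{-\zeta y}=(n/\zeta)^n e^{-n}$ (the paper's Lemma \ref{appendix0}, which you inline via the substitution $y=2(1-\alpha)t|\xi|^2$) giving the smoothing factor $(1-\alpha)^{-n}t^{-n}$. No gaps; your closing remark about borrowing decay to gain derivatives matches the paper's own comment on why the constants blow up as $\alpha\to 1$.
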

\begin{proof}
The item $(i)$ follows by \eqref{heat propagator}, using that $e^{- t |\xi|^2} \leq e^{- t}$ for any $t \geq 0$, $\xi \in \Z^d \setminus \{ 0 \}$, since $|\xi|^2 \geq 1$. We now prove the item $(ii)$. Let $n \in \N$, $\alpha \in (0, 1)$. One has 
\begin{equation}\label{marcantonio 0}
\begin{aligned}
\| e^{t \Delta} u_0 \|_{H^{s}_x}^2 & = \sum_{\xi \in \Z^d \setminus \{ 0 \}} | \xi |^{2 s } e^{- 2 t |\xi|^2} |\widehat u_0(\xi)|^2 \\
& =  \sum_{\xi \in \Z^d \setminus \{ 0 \}} e^{-  2 \alpha t |\xi|^2} | \xi |^{2 (s - n)}  |\xi|^{2n} e^{- 2(1 - \alpha) t |\xi|^2} |\widehat u_0(\xi)|^2\,. 
\end{aligned}
\end{equation}
Using that for any $\xi \in \Z^d \setminus \{ 0 \}$, $t \geq  0$, $e^{- 2 \alpha t |\xi|^2} \leq e^{- 2 \alpha t}$, by \eqref{marcantonio 0}, one gets that
\begin{equation}\label{marcantonio - 1}
\| e^{t \Delta} u_0 \|_{H^{s}_x}^2 \leq  e^{- 2 \alpha t} \sum_{\xi \in \Z^d \setminus \{ 0 \}} | \xi |^{2 (s - n)} |\xi|^{2n} e^{- 2(1 - \alpha) t |\xi|^2} |\widehat u_0(\xi)|^2
\end{equation}
By Lemma \ref{appendix0} (applied with $\zeta = 2 (1 - \alpha) t$), one has that 
$$
\sup_{\xi \in \Z^d \setminus \{ 0 \}} |\xi|^{2n} e^{- 2(1 - \alpha) t |\xi|^2} \leq \sup_{y \geq 0} y^n e^{- 2(1 - \alpha) t  y} \leq \frac{C(n)}{(1 - \alpha)^n t^n}
$$
for some constant $C(n)> 0$. Therefore by \eqref{marcantonio - 1}, one gets that 
$$
\begin{aligned}
\| e^{t \Delta} u_0 \|_{H^{s }_x}^2 & \lesssim_n  t^{- n} e^{- 2 \alpha t} (1 - \alpha)^{- n}\sum_{\xi \in \Z^d \setminus \{ 0 \}} | \xi |^{2 (s - n)}  |\widehat u_0(\xi)|^2  \\
& \lesssim_n  t^{- n} e^{- 2 \alpha t} (1 - \alpha)^{- n}\| u_0 \|_{H^{s - n}_x}^2\,. 
\end{aligned}
$$
The second inequality in \eqref{stima smoothing propagatore calore} clearly follows since $\| \cdot \|_{H^{s - n}_x} \leq \| \cdot \|_{H^{s - 1}_x}$ for $n \geq 1$. 
\end{proof}
We fix $\alpha \in (0, 1)$ and for any $s \geq 0$, we define the space 
\begin{equation}\label{def cal Es}
 {\cal E}_{s} := \Big\{ u \in {\cal C}_b^0\Big([0, + \infty), H^s_0(\T^d, \R^d) \Big) : \| u \|_{{\cal E}_s} := \sup_{t \geq 0} e^{\alpha t} \| u(t) \|_{H^s_x} \Big\}\,.
\end{equation}
Clearly 
\begin{equation}\label{monotonia norme}
\begin{aligned}
& \| \cdot \|_{{\cal E}_s} \leq \| \cdot \|_{{\cal E}_{s'}}, \quad \forall s \leq s'\,. 
\end{aligned}
\end{equation}
The following elementary lemma holds: 
\begin{lemma}\label{proprieta elementari cal Es}
$(i)$ Let $u \in {\cal E}_s$. Then 
\begin{equation}\label{inclusione cal Es}
\begin{aligned}
&  \| u \|_{{\cal C}^0_t H^s_x} \lesssim \| u \|_{{\cal E}_s} \quad \text{and} \quad \| {\frak L}(u) \|_{{\cal E}_s} \lesssim \| u \|_{{\cal E}_s}\,, \\
& \| u (t) \|_{H^s_x} \leq e^{- \alpha t} \| u \|_{{\cal E}_s}, \quad \forall t \geq 0\,. 
 \end{aligned}
\end{equation}
$(ii)$ Let $s > d/2$, $u \in {\cal E}_s$, $v \in {\cal C}^0_b\Big([0, + \infty), H^{s + 1}(\T^d, \R^d) \Big)$, ${\rm div}(u) = 0$. Then the product $u \cdot \nabla v \in {\cal E}_s$ and
\begin{equation}\label{interpolazione cal Es Hs}
\| u \cdot \nabla  v \|_{{\cal E}_s} \lesssim_s \| u \|_{{\cal E}_s} \| v \|_{{\cal C}^0_t H^{s + 1}_x}\,. 
\end{equation} 
$(iii)$ Let $s > d/2$, $u \in {\cal C}^0_b\Big([0, + \infty), H^s(\T^d, \R^d) \Big)$, ${\rm div}(u) = 0$ and $v \in {\cal E}_{s + 1}$. Then the product $u \cdot \nabla v \in {\cal E}_s$ and
\begin{equation}\label{interpolazione cal Es Hs 1}
\| u \cdot \nabla  v \|_{{\cal E}_s} \lesssim_s \| u \|_{{\cal C}^0_t H^s_x} \| v \|_{{\cal E}_{s + 1}}\,. 
\end{equation}
$(iv)$ Let $s> d/2$, $u \in {\cal E}_s$, ${\rm div}(u)= 0$, $v \in {\cal E}_{s + 1}$. Then $u \cdot \nabla v \in {\cal E}_{s}$ and 
\begin{equation}\label{interpolazione cal Es Hs 2}
\| u \cdot \nabla  v \|_{{\cal E}_s} \lesssim_s \| u \|_{{\cal E}_s} \| v \|_{{\cal E}_{s + 1}}\,. 
\end{equation}
\end{lemma}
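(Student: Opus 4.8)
The plan is to reduce all four items to two ingredients: the defining inequalities of the $\| \cdot \|_{{\cal E}_s}$-norm, and the pointwise-in-time algebra estimates already available for $H^s(\T^d)$ and $H^s_0(\T^d)$. The only genuine bookkeeping is deciding which factor in each bilinear estimate carries the exponential weight $e^{\alpha t}$.

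For $(i)$ the three claims are essentially restatements of the definition. Since $e^{\alpha t} \geq 1$ for $t \geq 0$, at every time $\| u(t) \|_{H^s_x} \leq e^{\alpha t} \| u(t) \|_{H^s_x} \leq \| u \|_{{\cal E}_s}$, and taking the supremum gives $\| u \|_{{\cal C}^0_t H^s_x} \lesssim \| u \|_{{\cal E}_s}$; the bound $\| u(t) \|_{H^s_x} \leq e^{- \alpha t} \| u \|_{{\cal E}_s}$ is just the definition rewritten. The Leray estimate follows by applying the boundedness property \eqref{boundedness Leray projector} at each fixed $t$ and then multiplying by $e^{\alpha t}$ before taking the supremum, using that $[{\frak L}, \cdot]$ acts pointwise in time.

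For $(ii)$ I would work at a fixed time $t \geq 0$. Because ${\rm div}(u(t)) = 0$, Lemma \ref{prop nonlinearita per quasi periodiche}-$(i)$ guarantees that $u(t) \cdot \nabla v(t)$ has zero space average, so it takes values in $H^s_0$; continuity in $t$ comes from the continuity of $u$ and $v$ combined with the bilinear bound below, so indeed $u \cdot \nabla v \in {\cal E}_s$. The algebra Lemma \ref{interpolazione sobolev 0} (valid since $s > d/2$) gives, for each $t$,
$$
\| u(t) \cdot \nabla v(t) \|_{H^s_x} \lesssim_s \| u(t) \|_{H^s_x} \| \nabla v(t) \|_{H^s_x} \lesssim_s \| u(t) \|_{H^s_x} \| v(t) \|_{H^{s+1}_x}.
$$
Since $v$ is only in ${\cal C}^0_b$ and carries no decay, I assign the full weight $e^{\alpha t}$ to $u$: multiplying through, the right-hand side is bounded by $\big(e^{\alpha t} \| u(t) \|_{H^s_x}\big)\, \| v(t) \|_{H^{s+1}_x} \leq \| u \|_{{\cal E}_s} \| v \|_{{\cal C}^0_t H^{s+1}_x}$, and taking the supremum over $t$ yields \eqref{interpolazione cal Es Hs}.

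Item $(iii)$ is the mirror image: the same pointwise algebra estimate holds, but now I put the weight $e^{\alpha t}$ on $v$, bounding $\| u(t) \|_{H^s_x}$ by $\| u \|_{{\cal C}^0_t H^s_x}$ and $e^{\alpha t}\| v(t) \|_{H^{s+1}_x}$ by $\| v \|_{{\cal E}_{s+1}}$. Finally $(iv)$ is an immediate corollary of $(i)$ and $(ii)$: since $v \in {\cal E}_{s+1}$, part $(i)$ gives $\| v \|_{{\cal C}^0_t H^{s+1}_x} \lesssim \| v \|_{{\cal E}_{s+1}}$, whence \eqref{interpolazione cal Es Hs} produces $\| u \cdot \nabla v \|_{{\cal E}_s} \lesssim_s \| u \|_{{\cal E}_s} \| v \|_{{\cal E}_{s+1}}$; equivalently one may distribute both exponential weights, which even leaves a spare factor $e^{-\alpha t} \leq 1$. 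There is no real obstacle here — the proof is elementary — the only points meriting care are routing the exponential weight onto the factor that possesses the corresponding ${\cal E}$-decay, and verifying that each product lands back in the zero-average space $H^s_0$, which the divergence-free hypothesis supplies through Lemma \ref{prop nonlinearita per quasi periodiche}-$(i)$.
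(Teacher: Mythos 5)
Your proof is correct and follows essentially the same route as the paper: item $(i)$ directly from the definition of $\| \cdot \|_{{\cal E}_s}$ together with \eqref{boundedness Leray projector}, items $(ii)$--$(iii)$ by the pointwise-in-time algebra estimate (Lemma \ref{interpolazione sobolev 0} applied componentwise via \eqref{equivalenza sobolev 0}, with the zero average supplied by Lemma \ref{prop nonlinearita per quasi periodiche}-$(i)$) and routing the weight $e^{\alpha t}$ onto the factor with ${\cal E}$-decay, and item $(iv)$ as a corollary of $(i)$ and $(ii)$. No gaps to report.
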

\begin{proof}
The item $(i)$ is very elementary and it follows in a straightforward way by the definition \eqref{def cal Es} and by recalling the estimate \eqref{boundedness Leray projector} on the Leray projector ${\frak L}$. We prove the item $(ii)$. By Lemma \ref{prop nonlinearita per quasi periodiche}-$(i)$, one has that $u(t) \cdot \nabla v(t)$ has zero average in $x$. Moreover 
$$
u \cdot \nabla v = \Big( u \cdot \nabla v_1, \ldots, u \cdot \nabla v_d \Big)
$$
therefore, since $s > d/2$, by Lemma \ref{interpolazione sobolev 0} and using \eqref{equivalenza sobolev 0} one has that for any $t \in [0, + \infty)$ $\| u (t) \cdot \nabla v(t) \|_{H^s_x} \lesssim_s \| u(t) \|_{H^s_x} \| v(t) \|_{H^{s + 1}_x}$ implying that 
\begin{equation}\label{cleopatra 0}
\begin{aligned}
e^{\alpha t} \| u(t) \cdot \nabla v(t) \|_{H^s_x} & \lesssim_s e^{\alpha t} \| u(t) \|_{H^s_x} \| v(t) \|_{H^{s + 1}_x} \lesssim_s \Big( \sup_{t \geq 0} e^{\alpha t} \| u(t) \|_{H^s_x} \Big) \Big( \sup_{t \geq 0} \| v(t) \|_{H^{s + 1}_x}\Big) \\
& \lesssim_s \| u \|_{{\cal E}_s} \| v \|_{{\cal C}^0_t H^{s + 1}_x}\,. 
\end{aligned}
\end{equation}
Passing to the supremum over $t \geq 0$ in the left hand side of \eqref{cleopatra 0}, we get the claimed statement. The item $(iii)$ follows by similar arguments and the item $(iv)$ follows by applying items $(i)$ and $(ii)$. 
\end{proof}
We now prove some estimates for the heat propagator $e^{t \Delta}$ in the space ${\cal E}_s$. 
\begin{lemma}\label{stima calore in E sn}
Let $s \geq 0$, $u_0 \in H^s_0(\T^d, \R^d)$. Then $\| e^{t \Delta } u_0 \|_{{\cal E}_s} \lesssim_\alpha \| u_0 \|_{H^s_x}$. 
\end{lemma}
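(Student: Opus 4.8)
The plan is to reduce the whole statement to the already-proven contraction estimate \eqref{stima banale propagatore calore}, which is precisely what the exponential weight built into ${\cal E}_s$ was designed to absorb. First I would recall that by the definition \eqref{def cal Es} the claim is just the assertion that the weighted quantity $e^{\alpha t}\| e^{t\Delta} u_0 \|_{H^s_x}$ stays bounded (by a multiple of $\| u_0 \|_{H^s_x}$) uniformly in $t \geq 0$, together with the membership $e^{t\Delta}u_0 \in {\cal C}^0_b([0,+\infty), H^s_0(\T^d,\R^d))$.

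The key inequality is Lemma \ref{lemma propagatore libero calore}-$(i)$, namely $\| e^{t\Delta} u_0 \|_{H^s_x} \leq e^{-t}\| u_0 \|_{H^s_x}$, which holds precisely because $u_0$ has zero average and hence only frequencies $|\xi| \geq 1$ enter \eqref{heat propagator}. Multiplying by the weight gives, for every $t \geq 0$,
\begin{equation}\label{piano calore Es}
e^{\alpha t}\| e^{t\Delta} u_0 \|_{H^s_x} \leq e^{-(1-\alpha)t}\| u_0 \|_{H^s_x} \leq \| u_0 \|_{H^s_x}\,,
\end{equation}
using only that $\alpha \in (0,1)$, so that $1-\alpha > 0$ and the exponential factor is $\leq 1$. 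Taking the supremum over $t \geq 0$ then yields $\| e^{t\Delta} u_0 \|_{{\cal E}_s} \leq \| u_0 \|_{H^s_x}$, which is the claim (in fact with an absolute constant, so the dependence on $\alpha$ is not even needed here). Equally directly one can argue in Fourier: by \eqref{heat propagator} one has $e^{2\alpha t}\| e^{t\Delta}u_0 \|_{H^s_x}^2 = \sum_{\xi \neq 0} \langle\xi\rangle^{2s} e^{2t(\alpha - |\xi|^2)} |\widehat u_0(\xi)|^2$, and since $|\xi|^2 \geq 1 > \alpha$ each factor $e^{2t(\alpha-|\xi|^2)}$ is $\leq 1$, so the sum is dominated by $\| u_0 \|_{H^s_x}^2$.

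Finally I would verify the membership $e^{t\Delta}u_0 \in {\cal E}_s$ itself: boundedness and continuity of the orbit in $H^s_x$ follow from the standard well-posedness of the heat equation recalled just before Lemma \ref{lemma propagatore libero calore}, while the zero-average property is preserved because the Fourier sum in \eqref{heat propagator} runs only over $\xi \neq 0$. There is no genuine obstacle in this lemma; the single point worth isolating is the spectral gap $|\xi|^2 \geq 1$ coming from the zero-average constraint, which dominates the weight $\alpha < 1$ and makes the factor in \eqref{piano calore Es} bounded. Once this comparison is in place the estimate is immediate.
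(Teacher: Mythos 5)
Your proof is correct, and it is in fact simpler and sharper than the paper's. The paper proves this lemma by splitting the time axis into two regimes: for $t \in [0,1]$ it uses the crude bound $e^{\alpha t} \leq e$ together with Lemma \ref{lemma propagatore libero calore}-$(i)$, while for $t > 1$ it invokes the smoothing estimate of Lemma \ref{lemma propagatore libero calore}-$(ii)$ with $n = 1$, getting $e^{\alpha t} \| e^{t\Delta} u_0 \|_{H^s_x} \lesssim_\alpha t^{-1/2} \| u_0 \|_{H^{s-1}_x} \lesssim_\alpha \| u_0 \|_{H^s_x}$ — which is exactly where the $\alpha$-dependence of the constant comes from. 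You instead observe that item $(i)$ alone already carries the full exponential decay $e^{-t}$ (due to the spectral gap $|\xi|^2 \geq 1$ on zero-average data), so that $e^{\alpha t} \| e^{t\Delta} u_0 \|_{H^s_x} \leq e^{-(1-\alpha)t} \| u_0 \|_{H^s_x} \leq \| u_0 \|_{H^s_x}$ uniformly in $t \geq 0$. This buys you the cleaner conclusion $\| e^{t\Delta} u_0 \|_{{\cal E}_s} \leq \| u_0 \|_{H^s_x}$ with constant $1$, independent of $\alpha$ and with no loss of derivatives, avoiding both the two-regime decomposition and the smoothing estimate; those tools are genuinely needed only in Proposition \ref{stima dispersiva principale}, where one space derivative must be gained on the Duhamel term, not here. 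Your verification of the membership $e^{t\Delta} u_0 \in {\cal C}^0_b\big([0,+\infty), H^s_0(\T^d,\R^d)\big)$ (continuity from the standard heat semigroup theory, zero average preserved since the sum in \eqref{heat propagator} runs over $\xi \neq 0$) is also complete, and is a point the paper's proof leaves implicit.
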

\begin{proof}
We have to estimate uniformly w.r. to $t \geq 0$, the quantity $e^{\alpha t} \| e^{t \Delta} u_0 \|_{H^s_x}$. For $t \in [0, 1]$, since $e^{\alpha t} \leq e^{\alpha} \stackrel{\alpha < 1}{\leq} e$ and by applying Lemma \ref{lemma propagatore libero calore}-$(i)$, one gets that 
\begin{equation}\label{gricia 100}
e^{\alpha t} \| e^{t \Delta } u_0 \|_{H^s_x} \lesssim  \| e^{t \Delta } u_0 \|_{H^s_x} \lesssim \| u_0 \|_{H^s_x}\,, \quad \forall t \in [0, 1]\,. 
\end{equation}
For $t > 1$, by applying Lemma \ref{lemma propagatore libero calore}-$(ii)$ (for $n = 1$), one gets that 
\begin{equation}\label{gricia 101}
e^{\alpha t}\| e^{t \Delta } u_0 \|_{H^s_x}  \lesssim_\alpha t^{- \frac12} \| u_0 \|_{H^{s - 1}_x} \lesssim_\alpha \| u_0 \|_{H^s_x}\,, \quad \forall t > 1\,. 
\end{equation}
Hence the claimed statement follows by \eqref{gricia 100}, \eqref{gricia 101} passing to the supremum over $t \geq 0$. 
\end{proof}
The main result of this section is the following Proposition
\begin{proposition}\label{stima dispersiva principale}
Let $s\geq 1$, $f \in {\cal E}_{s - 1}$ and define 
\begin{equation}\label{integrale duhamel}
u(t) \equiv u(t, \cdot) := \int_0^t e^{(t - \tau)\Delta} f(\tau, \cdot)\, d \tau\,. 
\end{equation}
Then $u \in {\cal E}_s$ and 
\begin{equation}\label{stima principale per punto fisso}
\| u \|_{{\cal E}_s} \lesssim_\alpha \| f \|_{{\cal E}_{s - 1}}\,. 
\end{equation}
\end{proposition}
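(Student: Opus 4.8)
The plan is to estimate the quantity $e^{\alpha t}\|u(t)\|_{H^s_x}$ uniformly in $t \geq 0$ and then pass to the supremum. Starting from the Duhamel formula \eqref{integrale duhamel}, I would bring the $H^s_x$-norm inside the integral,
$$
\|u(t)\|_{H^s_x} \leq \int_0^t \|e^{(t-\tau)\Delta} f(\tau)\|_{H^s_x}\, d\tau\,,
$$
and then control each factor by combining the smoothing estimate of Lemma \ref{lemma propagatore libero calore}-$(ii)$ (which gains one space derivative at the cost of the singular weight $(t-\tau)^{-1/2}$) with the pointwise-in-time decay $\|f(\tau)\|_{H^{s-1}_x} \leq e^{-\alpha\tau}\|f\|_{{\cal E}_{s-1}}$ furnished by the definition \eqref{def cal Es} (see Lemma \ref{proprieta elementari cal Es}-$(i)$).

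The key point — and the main obstacle — is the choice of the exponential decay rate in the smoothing estimate. If one applies Lemma \ref{lemma propagatore libero calore}-$(ii)$ with the rate $\alpha$ itself, the two exponentials combine into $e^{-\alpha t}$, but the remaining time integral $\int_0^t (t-\tau)^{-1/2}\,d\tau = 2\sqrt t$ diverges as $t\to+\infty$, so the bound fails to be uniform. To remedy this I would instead invoke Lemma \ref{lemma propagatore libero calore}-$(ii)$ with $n=1$ and an \emph{intermediate} decay rate $\beta \in (\alpha, 1)$, concretely $\beta := (1+\alpha)/2$, which is legitimate since that lemma holds for every rate in $(0,1)$. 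This yields
$$
\|e^{(t-\tau)\Delta} f(\tau)\|_{H^s_x} \lesssim (t-\tau)^{-\frac12}(1-\beta)^{-\frac12} e^{-\beta(t-\tau)}\|f(\tau)\|_{H^{s-1}_x} \lesssim_\alpha (t-\tau)^{-\frac12} e^{-\beta(t-\tau)} e^{-\alpha\tau}\|f\|_{{\cal E}_{s-1}}\,.
$$

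Multiplying the resulting bound for $\|u(t)\|_{H^s_x}$ by $e^{\alpha t}$ and using the identity $e^{\alpha t}e^{-\beta(t-\tau)}e^{-\alpha\tau} = e^{-(\beta-\alpha)(t-\tau)}$, I would arrive at
$$
e^{\alpha t}\|u(t)\|_{H^s_x} \lesssim_\alpha \|f\|_{{\cal E}_{s-1}} \int_0^t (t-\tau)^{-\frac12} e^{-(\beta-\alpha)(t-\tau)}\, d\tau\,.
$$
The substitution $r = t-\tau$ turns the integral into $\int_0^t r^{-1/2} e^{-(\beta-\alpha)r}\,dr \leq \int_0^{+\infty} r^{-1/2} e^{-(\beta-\alpha)r}\,dr = \Gamma(\tfrac12)(\beta-\alpha)^{-1/2}$, which is finite and, crucially, \emph{independent of} $t$: the surplus exponential $e^{-(\beta-\alpha)(t-\tau)}$ now renders the singular kernel integrable on all of $[0,+\infty)$. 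Since $\beta-\alpha = (1-\alpha)/2$ and $1-\beta=(1-\alpha)/2$, the total constant depends only on $\alpha$, and taking the supremum over $t\geq 0$ gives $\|u\|_{{\cal E}_s} \lesssim_\alpha \|f\|_{{\cal E}_{s-1}}$.

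It then remains to verify the qualitative membership $u \in {\cal E}_s$: each $e^{(t-\tau)\Delta}f(\tau)$ has zero $x$-average, since the heat propagator \eqref{heat propagator} only involves frequencies $\xi \neq 0$, so $u(t)$ inherits zero average; boundedness in $H^s_x$ is precisely the estimate just established; and the continuity of $t \mapsto u(t) \in H^s_x$ follows from the continuity of the integrand together with the uniform integrable majorant, by dominated convergence. The only genuinely delicate step is the uniform-in-$t$ control of the convolution, which is settled exactly by the intermediate-rate device above.
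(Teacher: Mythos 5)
Your proof is correct, but it proceeds differently from the paper's. The paper handles the divergence of $\int_0^t (t-\tau)^{-1/2}\,d\tau$ by splitting in time rather than by tilting the exponential rate: it first treats $t\in[0,1]$ (Lemma \ref{integrale propagatore 1}), where the $n=1$ smoothing kernel is integrable simply because the interval is bounded, and then for $t>1$ it splits the Duhamel integral at $t-\tfrac12$, applying Lemma \ref{lemma propagatore libero calore}-$(ii)$ with $n=4$ on $[0,t-\tfrac12]$ — so the kernel becomes $(t-\tau)^{-2}$, integrable away from the singularity since $t-\tau\geq\tfrac12$ there (Lemma \ref{integrale propagatore 2}) — and with $n=1$ on $[t-\tfrac12,t]$, where $\int_{t-\frac12}^t(t-\tau)^{-1/2}\,d\tau=\sqrt2$ (Lemma \ref{integrale propagatore 3}); in both far/near pieces the decay rate is taken equal to $\alpha$ itself. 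Your intermediate-rate device $\beta=(1+\alpha)/2$ replaces this three-case analysis with a single convolution estimate, exploiting that the surplus factor $e^{-(\beta-\alpha)(t-\tau)}$ puts $(t-\tau)^{-1/2}e^{-(\beta-\alpha)(t-\tau)}$ in $L^1(0,+\infty)$ uniformly in $t$, with the explicit constant $\Gamma(\tfrac12)(\beta-\alpha)^{-1/2}$; since $\beta-\alpha=1-\beta=(1-\alpha)/2$, the dependence on $\alpha$ stays admissible, and indeed both arguments produce constants blowing up as $\alpha\to1$, consistently with the remark in the introduction that the decay $O(e^{-t})$ is unreachable. What your route buys is brevity and a cleaner quantitative constant; what the paper's route buys is that each piece is entirely elementary (no Gamma-function integral) and it isolates the two distinct mechanisms — integrability of the singular kernel near $\tau=t$ versus extra powers of $(t-\tau)^{-1}$ at infinity — which is the standard parabolic-smoothing bookkeeping. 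One small point to keep at the paper's level of rigor: your dominated-convergence argument for continuity of $t\mapsto u(t)$ in $H^s_x$ should acknowledge the moving singularity at $\tau=t$, e.g.\ by writing $u(t+h)-u(t)$ as the short integral over $[t,t+h]$ plus the difference of propagators on $[0,t]$, both of which your majorant controls; this is routine and does not affect the substance.
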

The proof is split in several steps. The first step is to estimate the integral in \eqref{integrale duhamel} for any $t \in [0, 1]$. 
\begin{lemma}\label{integrale propagatore 1}
Let $t \in [0, 1]$, $f \in {\cal E}_{s - 1}$ and $u$ defined by \eqref{integrale duhamel}. Then 
$$
\| u (t) \|_{H^s_x} \lesssim_\alpha \| f \|_{{\cal C}^0_t H^{s - 1}_x}\,.
$$
\end{lemma}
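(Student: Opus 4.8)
The plan is to bound the $H^s_x$ norm of $u(t)$ by passing the norm inside the Duhamel integral \eqref{integrale duhamel} and applying the smoothing estimate of Lemma~\ref{lemma propagatore libero calore}-$(ii)$ with $n = 1$ to each factor $e^{(t - \tau)\Delta} f(\tau, \cdot)$. First I would observe that since $f \in {\cal E}_{s-1}$, for every $\tau \geq 0$ the function $f(\tau, \cdot)$ lies in $H^{s-1}_0(\T^d, \R^d)$, so the hypotheses of Lemma~\ref{lemma propagatore libero calore}-$(ii)$ are satisfied. Taking $n = 1$ then gives, for $0 \leq \tau < t$,
$$
\| e^{(t - \tau)\Delta} f(\tau, \cdot) \|_{H^s_x} \lesssim_\alpha (t - \tau)^{-\frac12} e^{- \alpha(t - \tau)} \| f(\tau, \cdot) \|_{H^{s-1}_x}\,.
$$

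Next, by the triangle inequality applied to \eqref{integrale duhamel}, together with the crude bounds $e^{- \alpha(t - \tau)} \leq 1$ and $\| f(\tau, \cdot) \|_{H^{s-1}_x} \leq \| f \|_{{\cal C}^0_t H^{s-1}_x}$, I obtain
$$
\| u(t) \|_{H^s_x} \leq \int_0^t \| e^{(t - \tau)\Delta} f(\tau, \cdot) \|_{H^s_x}\, d\tau \lesssim_\alpha \| f \|_{{\cal C}^0_t H^{s-1}_x} \int_0^t (t - \tau)^{-\frac12}\, d\tau\,.
$$
The change of variables $\sigma = t - \tau$ evaluates the remaining integral as $\int_0^t (t - \tau)^{-\frac12}\, d\tau = \int_0^t \sigma^{-\frac12}\, d\sigma = 2 \sqrt{t} \leq 2$ for $t \in [0, 1]$, which yields the asserted estimate.

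The only delicate point is the integrability of the kernel $(t - \tau)^{-1/2}$ at the endpoint $\tau = t$: gaining one derivative from $H^{s-1}$ to $H^s$ costs exactly the factor $(t - \tau)^{-1/2}$, whose singularity is integrable precisely because the exponent $\tfrac12$ is strictly less than $1$. This is the reason for applying the smoothing estimate with $n = 1$; any choice $n \geq 2$ would produce a non-integrable singularity $(t - \tau)^{-n/2}$ and break the argument. On the bounded interval $[0,1]$ the exponential decay $e^{- \alpha(t - \tau)}$ is inessential and is simply discarded, so the $\alpha$-dependence of the constant enters only through the implicit constant of Lemma~\ref{lemma propagatore libero calore}-$(ii)$.
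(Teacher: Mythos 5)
Your proof is correct and follows the same route as the paper: both apply the smoothing estimate of Lemma~\ref{lemma propagatore libero calore}-$(ii)$ with $n = 1$ under the integral, discard the exponential factor, pull out the sup-norm of $f$, and conclude via the integrable singularity $\int_0^t (t-\tau)^{-1/2}\, d\tau \leq 2$ for $t \in [0,1]$. Your closing remark on why $n = 1$ is the forced choice is accurate and matches the structural reason the paper reserves larger $n$ for the regime $t > 1$ in Lemma~\ref{integrale propagatore 2}.
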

\begin{proof}
Let $t \in [0, 1]$. Then 
\begin{equation}\label{cesare 0}
\begin{aligned}
\| u (t) \|_{H^s_x} & \leq \int_0^t \Big\| e^{(t - \tau)\Delta} f(\tau, \cdot) \Big\|_{H^s_x} d \tau \stackrel{\eqref{stima smoothing propagatore calore}}{\lesssim_\alpha} \int_0^t e^{- \frac{t - \tau}{2}}\frac{\| f(\tau, \cdot) \|_{H^{s - 1}_x}}{\sqrt{t - \tau}}\, d \tau \\
& \stackrel{e^{- \frac{t - \tau}{2}} \leq 1}{\lesssim_\alpha} \int_0^t \frac{1}{\sqrt{t - \tau}}\, d \tau \| f \|_{{\cal C}^0_t H^{s - 1}_x}\,. 
\end{aligned}
\end{equation}
By making the change of variables $z = t - \tau $, one gets that 
$$
\int_0^t \frac{1}{\sqrt{t - \tau}}\, d \tau = \int_0^t \frac{1}{\sqrt{z}}\,d z \stackrel{t \leq 1}{\leq} \int_0^1 \frac{1}{\sqrt{z}}\, d z = 2
$$
and hence in view of \eqref{cesare 0}, one gets $\| u (t) \|_{H^s_x} \lesssim_\alpha \| f \|_{{\cal C}^0_t H^{s - 1}_x}$ for any $t \in [0, 1]$, which is the claimed stetement. 
\end{proof}
For $t > 1$, we split the integral term in \eqref{integrale duhamel} as 
$$
\int_0^t e^{(t - \tau)\Delta} f(\tau, \cdot)\, d \tau = \int_0^{t - \frac12} e^{(t - \tau)\Delta} f(\tau, \cdot)\, d \tau + \int_{t - \frac12}^t e^{(t - \tau)\Delta} f(\tau, \cdot)\, d \tau
$$
and we estimate separately the two terms in the latter formula. More precisely the first term is estimated in Lemma \ref{integrale propagatore 2} and the second one in Lemma \ref{integrale propagatore 3}. 
\begin{lemma}\label{integrale propagatore 2}
Let $t > 1$. Then 
$$
\Big\| \int_0^{t - \frac12} e^{(t- \tau) \Delta} f(\tau, \cdot)\, d \tau \Big\|_{H^s_x} \lesssim_\alpha e^{- \alpha t} \| f \|_{{\cal E}_{s - 1}}
$$
\end{lemma}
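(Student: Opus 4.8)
The plan is to observe that throughout the interval of integration $\tau \in [0, t - \tfrac12]$ one has $t - \tau \geq \tfrac12$, so the smoothing estimate stays uniformly away from its singularity at $t-\tau=0$, and the only possible source of trouble is the length of the interval growing with $t$. First I would apply Lemma \ref{lemma propagatore libero calore}-$(ii)$ with $n = 1$, but — and this is the crucial choice — with the decay exponent taken to be some $\beta \in (\alpha, 1)$ rather than $\alpha$ itself (for concreteness $\beta := \tfrac{1 + \alpha}{2}$); since $f(\tau, \cdot) \in H^{s-1}_0(\T^d, \R^d)$ this yields
$$
\big\| e^{(t - \tau)\Delta} f(\tau, \cdot) \big\|_{H^s_x} \lesssim_\alpha (t - \tau)^{-\frac12}\, e^{- \beta(t - \tau)} \| f(\tau, \cdot) \|_{H^{s-1}_x}\,,
$$
the constant $(1-\beta)^{-1/2}$ being absorbed into the $\alpha$-dependent implicit constant. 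Inserting the pointwise bound $\| f(\tau, \cdot) \|_{H^{s-1}_x} \leq e^{-\alpha \tau}\| f \|_{{\cal E}_{s-1}}$ from \eqref{inclusione cal Es} and regrouping the exponentials via $e^{-\beta(t-\tau)} e^{-\alpha\tau} = e^{-\alpha t}\, e^{-(\beta - \alpha)(t-\tau)}$, I would factor $e^{-\alpha t}\| f\|_{{\cal E}_{s-1}}$ out of the integral.

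It then remains to control, uniformly in $t > 1$, the integral
$$
\int_0^{t - \frac12} (t - \tau)^{-\frac12}\, e^{-(\beta - \alpha)(t - \tau)}\, d\tau = \int_{\frac12}^{t} z^{-\frac12}\, e^{-(\beta - \alpha) z}\, dz \leq \int_0^{+\infty} z^{-\frac12}\, e^{-(\beta-\alpha) z}\, dz = \sqrt{\frac{\pi}{\beta - \alpha}}\,,
$$
where I use the substitution $z = t - \tau$ and then extend the domain to $[0, +\infty)$. Since $\beta - \alpha = \tfrac{1 - \alpha}{2}$ depends only on $\alpha$, this last quantity is a finite constant $C(\alpha)$, and assembling the pieces gives exactly $\big\| \int_0^{t - 1/2} e^{(t-\tau)\Delta} f(\tau, \cdot)\, d\tau \big\|_{H^s_x} \lesssim_\alpha e^{-\alpha t}\| f \|_{{\cal E}_{s-1}}$, as claimed. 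One could equally well bound $(t-\tau)^{-1/2} \leq \sqrt 2$ on this interval, making the integrability of $e^{-(\beta-\alpha)(t-\tau)}$ even more transparent.

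The subtle point — and the only real obstacle — is that one must not spend the full decay rate $\alpha$ inside the propagator. With the naive choice $\beta = \alpha$ the surviving integral becomes $\int_{1/2}^t z^{-1/2}\,dz \sim \sqrt{t}$, producing a spurious factor $\sqrt{t}\,e^{-\alpha t}$ which is \emph{not} $O(e^{-\alpha t})$. Reserving a strictly positive margin $\beta - \alpha > 0$ of exponential decay is therefore essential, and it is precisely this mechanism that forces the constant to blow up as $\alpha \uparrow 1$, consistently with the remark in the introduction that the decay rate $e^{-t}$ cannot be reached.
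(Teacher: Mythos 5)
Your proof is correct, and it reaches the bound by a slightly different mechanism than the paper's. Both arguments share the same skeleton: Minkowski's inequality, the smoothing estimate \eqref{stima smoothing propagatore calore} of Lemma \ref{lemma propagatore libero calore}-$(ii)$, the pointwise bound $\| f(\tau,\cdot)\|_{H^{s-1}_x} \leq e^{-\alpha \tau} \| f \|_{{\cal E}_{s-1}}$ from \eqref{inclusione cal Es}, and the substitution $z = t - \tau$, the whole point being that on $[0, t - \tfrac12]$ one has $t - \tau \geq \tfrac12$, so the singularity of the smoothing estimate is irrelevant and only integrability as $z \to +\infty$ matters. The paper secures that integrability by taking $n = 4$ in the smoothing estimate at the \emph{full} rate $\alpha$ (legitimate because $\| u_0 \|_{H^{s-n}_x} \leq \| u_0 \|_{H^{s-1}_x}$ for any $n \geq 1$), so that $\int_{1/2}^{+\infty} z^{-2}\, dz < \infty$ with no exponential margin at all; you instead keep $n = 1$ and buy integrability with the margin $e^{-(\beta - \alpha) z}$, $\beta = \tfrac{1+\alpha}{2}$. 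Both routes produce constants blowing up as $\alpha \uparrow 1$, yours through $(\beta - \alpha)^{-1/2}$ and $(1 - \beta)^{-1/2}$, the paper's through the factor $(1-\alpha)^{-n/2}$ already present in \eqref{stima smoothing propagatore calore}. The one inaccuracy is your closing claim that reserving a margin $\beta - \alpha > 0$ is \emph{essential}: it is essential only if one insists on $n = 1$. The paper's choice shows one may spend the full rate $\alpha$ inside the propagator provided one spends enough of the polynomial decay instead (any $n$ with $n/2 > 1$ works, since the region stays a distance $\tfrac12$ from the singularity at $z = 0$). Your diagnosis of the naive choice $(n, \beta) = (1, \alpha)$, which produces the spurious factor $\sqrt{t}\, e^{-\alpha t}$, is nonetheless correct and is exactly the obstruction both proofs are designed to avoid.
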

\begin{proof}
Let $t > 1$. One then has 
\begin{equation}\label{cesare 1}
\begin{aligned}
\Big\|  \int_0^{t - \frac12} e^{(t- \tau) \Delta} f(\tau, \cdot)\, d \tau  \Big\|_{H^s_x} & \leq  \int_0^{t - \frac12} \Big\| e^{(t- \tau) \Delta} f(\tau, \cdot) \Big\|_{H^s_x}\, d \tau   \\
& \stackrel{\eqref{stima smoothing propagatore calore}}{\lesssim_{n, \alpha}} \int_0^{t - \frac12} e^{- \alpha (t - \tau)} \frac{1}{(t - \tau)^{\frac{n}{2}}} \| f(\tau, \cdot) \|_{H^{s - 1}_x}\, d \tau \\
& \lesssim_{n, \alpha} e^{- \alpha t} \int_0^{t - \frac12} \frac{1}{(t - \tau)^{\frac{n}{2}}} e^{\alpha \tau} \| f(\tau, \cdot) \|_{H^{s - 1}_x}\, d \tau \\
& \lesssim_{n, \alpha} e^{- \alpha t} \Big(    \int_0^{t - \frac12} \frac{d \tau}{(t - \tau)^{\frac{n}{2}}} \Big)\,\,\sup_{\tau \geq 0} e^{\alpha \tau} \| f(\tau, \cdot) \|_{H^{s - 1}_x}\,.
\end{aligned}
\end{equation}
By choosing $n = 4$ and by making the change of variables $z = t - \tau$, on gets that
$$
\int_0^{t - \frac12} \frac{d \tau}{(t - \tau)^{\frac{n}{2}}} = \int_0^{t - \frac12} \frac{d \tau}{(t - \tau)^{2}} = \int_{\frac12}^{t} \frac{d z}{z^2} \leq \int_{\frac12}^{+ \infty} \frac{d z}{z^2} < \infty\,. 
$$
The latter estimate, together with the estimate \eqref{cesare 1} imply the claimed statement. 
\end{proof}

\begin{lemma}\label{integrale propagatore 3}
Let $t > 1$. Then 
$$
\Big\| \int_{t - \frac12}^{t} e^{(t- \tau) \Delta} f(\tau, \cdot)\, d \tau \Big\|_{H^s_x} \lesssim_\alpha e^{- \alpha t} \| f \|_{{\cal E}_{s - 1}}\,. 
$$
\end{lemma}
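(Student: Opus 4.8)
The plan is to bound the integrand pointwise using the smoothing estimate \eqref{stima smoothing propagatore calore}, but with the \emph{minimal} exponent $n=1$ rather than the large exponent ($n=4$) used in Lemma \ref{integrale propagatore 2}. On the short interval $[t-\frac12, t]$ the quantity $t-\tau$ ranges in $[0,\frac12]$, so any power $(t-\tau)^{-n/2}$ with $n\geq 2$ would be non-integrable at the endpoint $\tau = t$; the choice $n=1$ produces the borderline-integrable singularity $(t-\tau)^{-\frac12}$, exactly as in Lemma \ref{integrale propagatore 1}. The decay in $t$ must therefore come entirely from the exponential factors rather than from the algebraic one.

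First I would move the $H^s_x$-norm inside the integral and apply \eqref{stima smoothing propagatore calore} with $n=1$ to the propagator $e^{(t-\tau)\Delta}$ acting on $f(\tau,\cdot)$, obtaining
$$
\Big\| \int_{t-\frac12}^{t} e^{(t-\tau)\Delta} f(\tau,\cdot)\, d\tau \Big\|_{H^s_x} \lesssim_\alpha \int_{t-\frac12}^{t} (t-\tau)^{-\frac12}\, e^{-\alpha(t-\tau)}\, \| f(\tau,\cdot) \|_{H^{s-1}_x}\, d\tau\,.
$$
Next I would insert the pointwise-in-time bound $\| f(\tau,\cdot) \|_{H^{s-1}_x} \leq e^{-\alpha\tau} \| f \|_{{\cal E}_{s-1}}$ furnished by \eqref{inclusione cal Es}. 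The two exponentials then combine as $e^{-\alpha(t-\tau)} e^{-\alpha\tau} = e^{-\alpha t}$, which factors out of the integral together with $\| f \|_{{\cal E}_{s-1}}$, leaving
$$
e^{-\alpha t}\, \| f \|_{{\cal E}_{s-1}} \int_{t-\frac12}^{t} (t-\tau)^{-\frac12}\, d\tau\,.
$$

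Finally, the change of variables $z = t-\tau$ turns the remaining integral into $\int_0^{1/2} z^{-\frac12}\, dz = \sqrt{2}$, a finite constant independent of $t$, which yields the claimed estimate. The only point requiring attention — and essentially the entire content of the lemma — is the matching of the two exponential factors: one must retain the factor $e^{-\alpha(t-\tau)}$ inside the smoothing estimate instead of bounding it crudely by $1$ as was done on $[0,1]$ in Lemma \ref{integrale propagatore 1}, since here it is precisely this factor that conspires with the $e^{-\alpha\tau}$ decay of $f$ to produce the required $e^{-\alpha t}$. Beyond this bookkeeping there is no genuine obstacle, as the singularity $(t-\tau)^{-\frac12}$ is already integrable over the interval.
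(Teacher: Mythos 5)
Your proof is correct and follows essentially the same route as the paper: both apply the smoothing estimate \eqref{stima smoothing propagatore calore} with $n=1$, extract $e^{-\alpha t}$ by combining $e^{-\alpha(t-\tau)}$ with the $e^{-\alpha\tau}$ decay of $f$ from the ${\cal E}_{s-1}$ norm, and conclude via the change of variables $z = t-\tau$ giving $\int_0^{1/2} z^{-1/2}\,dz = \sqrt{2}$. Your remark that the integrable singularity forces $n=1$ here (in contrast with $n=4$ in Lemma \ref{integrale propagatore 2}) accurately identifies the one point of the argument that differs between the two lemmas.
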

\begin{proof}
One has 
\begin{equation}\label{cesare 11}
\begin{aligned}
\Big\| \int_{t - \frac12}^{t} e^{(t- \tau) \Delta} f(\tau, \cdot)\, d \tau \Big\|_{H^s_x} & \leq \int_{t - \frac12}^{t}\Big\|   e^{(t- \tau) \Delta} f(\tau, \cdot) \Big\|_{H^s_x}\, d \tau  \\
& \stackrel{\eqref{stima smoothing propagatore calore}}{\lesssim_\alpha} e^{- \alpha t}\int_{t - \frac12}^t \frac{1}{\sqrt{t - \tau}} e^{\alpha \tau}\| f(\tau, \cdot) \|_{H^{s - 1}_x}\, d \tau \\
& \lesssim_\alpha e^{- \alpha t} \int_{t - \frac12}^t \frac{d \tau}{\sqrt{t - \tau}}  \,\, \Big( \sup_{\tau \geq 0}  e^{\alpha \tau} \| f(\tau, \cdot) \|_{H^{s - 1}_x} \Big)\,. 
\end{aligned}
\end{equation}
By making the change of variables $z = t - \tau$, one gets
$$
 \int_{t - \frac12}^t \frac{d \tau}{\sqrt{t - \tau}}  = \int_0^{\frac12} \frac{d z}{\sqrt{z}} = \sqrt{2}\,. 
$$
The latter estimate, together with \eqref{cesare 11} imply the claimed statement. 
\end{proof}
{\sc Proof of Proposition \ref{stima dispersiva principale}.}
\begin{proof}
For any $t \in [0, 1]$, since $e^{\alpha t} \leq e^{\alpha}\stackrel{\alpha < 1}{\leq} e$, by Lemma \ref{integrale propagatore 1}, one has that 
\begin{equation}\label{cleopatra 1}
\begin{aligned}
e^{\alpha t} \Big\| \int_0^t e^{(t - \tau) \Delta} f(\tau, \cdot)\, d \tau \Big\|_{H^s_x} & \lesssim \Big\| \int_0^t e^{(t - \tau) \Delta} f(\tau, \cdot)\, d \tau \Big\|_{H^s_x} \\
& \lesssim_\alpha \| f \|_{{\cal C}^0_t H^{s - 1}_x} \stackrel{\eqref{inclusione cal Es}}{\lesssim_\alpha} \| f \|_{{\cal E}_{s - 1}}\,. 
\end{aligned}
\end{equation}
For any $t > 1$, by applying Lemmata \ref{integrale propagatore 2}, \ref{integrale propagatore 3}, one gets  
\begin{equation}\label{cleopatra 2}
\begin{aligned}
 e^{\alpha t} \Big\| \int_0^t e^{(t - \tau) \Delta} f(\tau, \cdot)\, d \tau \Big\|_{H^s_x} &  \leq e^{\alpha t} \Big\| \int_0^{t - \frac12} e^{(t - \tau) \Delta} f(\tau, \cdot)\, d \tau \Big\|_{H^s_x} \\
& \quad +  e^{\alpha t} \Big\| \int_{t - \frac12}^t e^{(t - \tau) \Delta} f(\tau, \cdot)\, d \tau \Big\|_{H^s_x} \\
& \lesssim_\alpha  \| f \|_{{\cal E}_{s - 1}}  \,. 
\end{aligned}
\end{equation}
The claimed statement then follows by \eqref{cleopatra 1}, \eqref{cleopatra 2} and passing to the supremum over $t \in [0, + \infty)$. 
\end{proof}

\subsection{Proof of Proposition \ref{proposizione stabilita asintotica con Leray}}
The Proposition \ref{proposizione stabilita asintotica con Leray} is proved by a fixed point argument. For any $\delta > 0$ and $s\geq 0$, we define the ball 
\begin{equation}\label{palla punto fisso stabilita asintotica}
{\cal B}_s(\delta) := \Big\{ v \in {\cal E}_s : {\rm div}(v) = 0, \quad \| v \|_{{\cal E}_s} \leq \delta \Big\}
\end{equation}
and for any $v \in {\cal B}_s(\delta)$, we define the map 
\begin{equation}\label{mappa punto fisso stabilita asintotica}
\begin{aligned}
\Phi(v) & := e^{t \Delta} v_0 + \int_0^t e^{(t - \tau) \Delta} {\cal N}( v)(\tau, \cdot)\, d \tau\,, \\
{\cal N}( v) & := - {\frak L}\Big(u_\omega \cdot \nabla v + v \cdot \nabla u_\omega + v \cdot \nabla v \Big)\,.
\end{aligned}
\end{equation}
\begin{proposition}\label{contrazione punto fisso stabilita asintotica}
Let $s > d/2 + 1$, $\alpha \in (0, 1)$. Then there exists $\delta = \delta(s, \alpha, \nu, d) \in (0, 1)$ small enough such that for any $\e \in (0, \delta)$, $\Phi : {\cal B}_s(\delta) \to {\cal B}_s(\delta)$ is a contraction.
\end{proposition}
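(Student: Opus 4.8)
The plan is to prove the statement by the Banach fixed point theorem: $\Phi$ in \eqref{mappa punto fisso stabilita asintotica} is the Duhamel map associated to \eqref{eq per u stab asintotica}, so a fixed point of $\Phi$ in ${\cal E}_s$ is exactly the desired global, exponentially decaying solution $v$. The whole argument rests on the two linear estimates already established: Lemma \ref{stima calore in E sn} for the free term $e^{t\Delta}v_0$, and Proposition \ref{stima dispersiva principale} for the Duhamel integral. The latter is the crucial ingredient, because the nonlinearity ${\cal N}(v)$ carries one space derivative and therefore lies only in ${\cal E}_{s-1}$, while $\int_0^t e^{(t-\tau)\Delta}\,\cdot\,d\tau$ gains back exactly one derivative (it maps ${\cal E}_{s-1}$ into ${\cal E}_s$) while preserving the $e^{-\alpha t}$ decay. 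Thus the derivative count closes on the scale ${\cal E}_s$, and the genuinely hard analysis is entirely encapsulated in Proposition \ref{stima dispersiva principale}.

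First I would verify that $\Phi$ preserves the structural constraints defining ${\cal B}_s(\delta)$. Since $v_0\in H^s_0$ is divergence free and both $e^{t\Delta}$ and ${\frak L}$ are Fourier multipliers that commute with the divergence and preserve the zero-average condition (see \eqref{media proiettore di Leray}, \eqref{commutatore Leray fourier multiplier}), the field $\Phi(v)$ is divergence free and has zero space average; here one uses that each summand of ${\cal N}(v)$ has zero average by Lemma \ref{prop nonlinearita per quasi periodiche}-$(i)$, which applies precisely because ${\rm div}(u_\omega)={\rm div}(v)=0$. Hence $\Phi$ lands in the divergence-free, zero-average class underlying ${\cal E}_s$.

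The heart of the matter is the nonlinear estimate $\|{\cal N}(v)\|_{{\cal E}_{s-1}}\lesssim_s(\e+\|v\|_{{\cal E}_s})\|v\|_{{\cal E}_s}$. I would bound the three contributions to ${\cal N}(v)$ separately, applying Lemma \ref{proprieta elementari cal Es} with $s$ replaced by $s-1$ (legitimate since $s-1>d/2$) together with the ${\frak L}$-boundedness from part $(i)$: the term $u_\omega\cdot\nabla v$ is controlled by part $(iii)$, giving a bound $\lesssim\|u_\omega\|_{{\cal C}^0_t H^{s-1}_x}\|v\|_{{\cal E}_s}$; the term $v\cdot\nabla u_\omega$ by part $(ii)$, giving $\lesssim\|v\|_{{\cal E}_{s-1}}\|u_\omega\|_{{\cal C}^0_t H^s_x}$; and $v\cdot\nabla v$ by part $(iv)$, giving $\lesssim\|v\|_{{\cal E}_{s-1}}\|v\|_{{\cal E}_s}$. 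Using $\|u_\omega\|_{{\cal C}^0_t H^s_x}\lesssim\e$ (from Theorem \ref{esistenza quasi-periodiche} and Lemma \ref{piccolo lemma quasi periodiche}) and the monotonicity \eqref{monotonia norme}, these combine to the stated bound.

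Combining this with Lemma \ref{stima calore in E sn} and Proposition \ref{stima dispersiva principale} yields $\|\Phi(v)\|_{{\cal E}_s}\lesssim_\alpha\|v_0\|_{H^s_x}+(\e+\|v\|_{{\cal E}_s})\|v\|_{{\cal E}_s}$, and polarizing the quadratic term through the identity $v_1\cdot\nabla v_1-v_2\cdot\nabla v_2=(v_1-v_2)\cdot\nabla v_1+v_2\cdot\nabla(v_1-v_2)$ gives the Lipschitz bound $\|\Phi(v_1)-\Phi(v_2)\|_{{\cal E}_s}\lesssim_\alpha(\e+\|v_1\|_{{\cal E}_s}+\|v_2\|_{{\cal E}_s})\|v_1-v_2\|_{{\cal E}_s}$, where the free term $e^{t\Delta}v_0$ cancels. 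On ${\cal B}_s(\delta)$ with $\e<\delta$ the Lipschitz constant is $\lesssim_\alpha\delta$, hence below $1/2$ once $\delta$ is small, so $\Phi$ is a contraction. For the self-mapping property one must choose the radius so as to absorb the constant $C_\alpha$ multiplying $\|v_0\|_{H^s_x}$ in the free-flow estimate (equivalently, taking the datum bound a fixed fraction of the radius); after that the remaining quadratic terms are $O(\delta^2)$ and are dominated for $\delta$ small. I expect this constant-tracking in the self-mapping step — rather than any analytic difficulty — to be the only delicate point, since the derivative gain combined with the time decay has already been secured by Proposition \ref{stima dispersiva principale}.
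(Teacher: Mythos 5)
Your proposal is correct and follows essentially the same route as the paper's proof: the same structural verification of the divergence-free and zero-average constraints via Lemma \ref{prop nonlinearita per quasi periodiche}-$(i)$, the same bound $\|{\cal N}(v)\|_{{\cal E}_{s-1}}\lesssim_s(\e+\|v\|_{{\cal E}_s})\|v\|_{{\cal E}_s}$ obtained from Lemma \ref{proprieta elementari cal Es} at level $s-1$ together with \eqref{prop u omega prop principale}, the same combination of Lemma \ref{stima calore in E sn} and Proposition \ref{stima dispersiva principale}, and the same polarization identity for the Lipschitz estimate. Even your final remark matches the paper, which likewise closes the self-mapping step by requiring $C(s,\alpha)\|v_0\|_{H^s_x}\leq\delta/2$, i.e.\ $\|v_0\|_{H^s_x}\ll\delta$.
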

\begin{proof}
Since $v_0$ has zero divergence and zero average, then clearly by \eqref{heat propagator}, ${\rm div}\big( e^{t \Delta} v_0\big) = 0$,  $\int_{\T^3} e^{t \Delta} v_0(x)\, d x= 0$\,. Now let $v(t, x)$ be a function with zero average and zero divergence. Clearly ${\rm div}\Big( {\cal N}(v)\Big) = 0$
since in the definition of ${\cal N}( v)$ in \eqref{mappa punto fisso stabilita asintotica}, there is the Leray projector. Moreover using that ${\rm div}(v ) = {\rm div}(u_\omega) = 0$, by Lemma \ref{prop nonlinearita per quasi periodiche}-$(i)$, one gets ${\cal N}(v)$ has zero average and then by \eqref{heat propagator} also $\int_0^t e^{(t - \tau) \Delta} {\cal N}(v)(\tau, \cdot )\, d \tau$ has zero average. 
Hence, we have shown that ${\rm div}(\Phi (v)) = 0$ and $\int_{\T^d} \Phi(v)\, d x= 0$. Let now $\| v \|_{{\cal E}_s} \leq \delta$. We estimate $\| \Phi(v) \|_{{\cal E}_s}$. By recalling \eqref{mappa punto fisso stabilita asintotica}, Lemma \ref{stima calore in E sn}, Proposition \ref{stima dispersiva principale} and Lemma \ref{proprieta elementari cal Es}, one has
\begin{equation}\label{Phi Es 0}
\begin{aligned}
\| \Phi(v) \|_{{\cal E}_s} & \stackrel{\eqref{mappa punto fisso stabilita asintotica}}{\leq} \| e^{t \Delta} v_0 \|_{{\cal E}_s} + \Big\| \int_0^t e^{(t - \tau)\Delta} {\cal N}(v)(\tau, \cdot)\, d \tau \Big\|_{{\cal E}_s} \\
& \lesssim_\alpha \| v_0 \|_{H^s_x} + \| {\cal N}( v) \|_{{\cal E}_{s - 1}}  \\
& \lesssim_{s, \alpha} \| v_0 \|_{H^s_x} + \| u_\omega \|_{{\cal C}^0_t H^s_x} \| v \|_{{\cal E}_s} + \| v \|_{{\cal E}_s}^2\,. 
\end{aligned}
\end{equation}
By the estimates of Theorem \ref{esistenza quasi-periodiche}, by the definition \eqref{u omega t p omega t} and by applying Lemma \ref{piccolo lemma quasi periodiche}, one has 
\begin{equation}\label{prop u omega prop principale}
u_\omega \in {\cal C}^0_b \Big( \R, H^s(\T^d, \R^d) \Big) \quad \text{and} \quad \| u_\omega \|_{{\cal C}^0_t H^s_x} \lesssim_s \e \stackrel{\e \leq \delta}{\lesssim_s} \delta\,.
\end{equation}
Since $v \in {\cal B}_s(\delta)$, the estimate \eqref{Phi Es 0} implies that 
\begin{equation}\label{Phi Es 1}
\| \Phi(v) \|_{{\cal E}_s} \leq C(s, \alpha)\Big( \| v_0 \|_{H^s_x} +  \delta^2\Big)\,.
\end{equation}
Hence $\| \Phi(v) \|_{{\cal E}_s} \leq \delta$ provided 
$$
C(s, \alpha) \| v_0 \|_{H^s_x} \leq \frac{\delta}{2}, \quad C(s, \alpha)\delta \leq \frac{1}{2}\,. 
$$
These conditions are fullfilled by taking $\delta$ small enough and $\| v_0 \|_{H^s_x}\ll \delta$. Hence $\Phi : {\cal B}_s(\delta) \to {\cal B}_s(\delta)$. Now let $v_1, v_2 \in {\cal B}_s(\delta)$. We need to estimate 
\begin{equation}\label{Phi u 1 - u 2 stab asintotica}
\Phi(v_1) - \Phi(v_2) = \int_0^t e^{(t - \tau)\Delta} \Big({\cal N}(v_1) - {\cal N}(v_2)\Big)(\tau, \cdot)\, d \tau\,. 
\end{equation}
By \eqref{mappa punto fisso stabilita asintotica}
\begin{equation}\label{stima cal N u1 - u2}
\begin{aligned}
\| {\cal N}(v_1) - {\cal N}(v_2) \|_{{\cal E}_{s - 1}} & \leq \Big\| {\frak L}\Big( u_\omega \cdot \nabla(v_1 - v_2) \Big) \Big\|_{{\cal E}_{s - 1}} + \Big\| {\frak L}\Big( (v_1 - v_2) \cdot \nabla u_\omega \Big) \Big\|_{{\cal E}_{s - 1}} \\
& \quad  + \Big\| {\frak L}\Big( (v_1 - v_2) \cdot \nabla v_1 \Big) \Big\|_{{\cal E}_{s - 1}} + \Big\| {\frak L}\Big(  v_2 \cdot \nabla (v_1 - v_2) \Big) \Big\|_{{\cal E}_{s - 1}} \\
& \stackrel{\eqref{inclusione cal Es}}{\lesssim} \Big\| u_\omega \cdot \nabla(v_1 - v_2)  \Big\|_{{\cal E}_{s - 1}} + \Big\| (v_1 - v_2) \cdot \nabla u_\omega  \Big\|_{{\cal E}_{s - 1}} \\
& \quad  + \Big\|  (v_1 - v_2) \cdot \nabla v_1 \Big\|_{{\cal E}_{s - 1}} + \Big\|  v_2 \cdot \nabla (v_1 - v_2) \Big\|_{{\cal E}_{s - 1}} \\
& \stackrel{\eqref{interpolazione cal Es Hs}, \eqref{interpolazione cal Es Hs 1}, \eqref{interpolazione cal Es Hs 2}}{\lesssim_s} \Big( \| u_\omega \|_{{\cal C}^0_t H^s_x} + \| v_1 \|_{{\cal E}_s} + \| v_2 \|_{{\cal E}_s} \Big) \| v_1 - v_2 \|_{{\cal E}_s} \,.
\end{aligned}
\end{equation}
Hence, \eqref{prop u omega prop principale} and using that $v_1, v_2 \in {\cal B}_s(\delta)$ ($\| v_1 \|_{{\cal E}_s}, \| v_2 \|_{{\cal E}_s} \leq \delta$) and the estimate \eqref{stima cal N u1 - u2} imply that 
\begin{equation}\label{stima cal N u1 - u2 1}
\| {\cal N}(v_1) - {\cal N}(v_2) \|_{{\cal E}_{s - 1}} \lesssim_{s} \delta \| v_1 - v_2 \|_{{\cal E}_s}\,. 
\end{equation}
By \eqref{Phi u 1 - u 2 stab asintotica}, one gets that 
\begin{equation}\label{Phi u 1 - u 2 stab asintotica 1}
\begin{aligned}
\| \Phi(v_1 ) - \Phi(v_2) \|_{{\cal E}_s} &  \stackrel{Proposition \, \ref{stima dispersiva principale}}{\lesssim_{s, \alpha}} \| {\cal N}(v_1) - {\cal N}(v_2) \|_{{\cal E}_{s - 1}} \\
& \stackrel{\eqref{stima cal N u1 - u2 1}}{\leq} C(s, \alpha)\delta \| v_1 - v_2 \|_{{\cal E}_s}
\end{aligned}
\end{equation}
for some constant $C(s, \alpha) > 0$. Therefore 
$$
\| \Phi(v_1 ) - \Phi(v_2) \|_{{\cal E}_s} \leq \frac12 \| v_1 - v_2 \|_{{\cal E}_s}
$$
provided $\delta \leq \frac{1}{2 C(s, \alpha)}$. The claimed statement has then been proved. 
\end{proof}
{\sc Proof of Proposition \ref{proposizione stabilita asintotica con Leray} concluded.} By Proposition \ref{contrazione punto fisso stabilita asintotica}, using the contraction mapping theorem there exists a unique $v \in {\cal B}_s(\delta)$ which is a fixed point of the map $\Phi$ in \eqref{mappa punto fisso stabilita asintotica}. By the functional equation $v = \Phi(v)$, one deduces in a standard way that 
$$
v \in {\cal C}^1_b \Big( [0, + \infty), H^{s - 2}_0(\T^d, \R^d) \Big)
$$
and hence $v$ is a classical solution of the equation \eqref{eq per u stab asintotica}. By \eqref{inclusione cal Es} and using the trivial fact that $\| \Delta v \|_{{\cal E}_{s - 2}} \leq \| v \|_{{\cal E}_s}$
\begin{equation}\label{toro bla}
\begin{aligned}
\| \partial_t v \|_{{\cal E}_{s - 2}} & \lesssim \| v \|_{{\cal E}_s} + \| u_\omega \cdot \nabla v \|_{{\cal E}_{s - 2}} + \| v \cdot \nabla u_\omega \|_{{\cal E}_{s - 2}} + \| v \cdot \nabla v \|_{{\cal E}_{s - 2}}  \\
& \stackrel{\eqref{interpolazione cal Es Hs}-\eqref{interpolazione cal Es Hs 2}}{\lesssim_s} \| v \|_{{\cal E}_s}\Big(1 + \| u_\omega \|_{{\cal C}^0_t H^s_x} + \| v \|_{{\cal E}_s} \Big)\,. 
\end{aligned}
\end{equation}
Therefore using that $v \in {\cal B}_s(\delta)$ ($\| v \|_{{\cal E}_s} \leq \delta$) and by \eqref{prop u omega prop principale}, $\| u_\omega \|_{{\cal C}^0_t H^s_x} \lesssim_s \delta$, one gets, for $\delta$ small enough, the estimate $\| \partial_t v \|_{{\cal E}_{s - 2}} \lesssim_s \delta$ and the claimed statement follows by recalling \eqref{inclusione cal Es}. 
\subsection{Proof of Theorem \ref{stabilita asintotica}}\label{fine dim stabilita asintotica}
In view of Proposition \ref{proposizione stabilita asintotica con Leray}, it remains only to solve the equation \eqref{eq per pressione stabilita} for the pressure $q(t, x)$. The only solution with zero average of this latter equation is given by 
\begin{equation}\label{sol stabilita asintotica}
q := (- \Delta)^{- 1}\Big({\rm div}\Big( u_\omega \cdot \nabla v + v \cdot \nabla u_\omega + v \cdot \nabla v\Big) \Big)\,. 
\end{equation}
Using that $\| (- \Delta)^{- 1} {\rm div} a \|_{{\cal E}_s} \lesssim \| a \|_{{\cal E}_{s - 1}}$ for any $a \in {\cal E}_s$, one gets the inequality
\begin{equation}\label{stima finale per la pressione}
\begin{aligned}
\| q \|_{{\cal E}_s} & \lesssim \| u_\omega \cdot \nabla v \|_{{\cal E}_{s - 1}}  + \| v \cdot \nabla u_\omega \|_{{\cal E}_{s - 1}} + \| v \cdot \nabla v \|_{{\cal E}_{s - 1}} \,.
\end{aligned}
\end{equation}
Hence arguing as in \eqref{toro bla}, one deduces the estimate $\| q \|_{{\cal E}_s} \lesssim_{s} \delta$. The claimed estimate on $q$ then follows by recalling \eqref{inclusione cal Es} and the proof is concluded (recall that by \eqref{ansatz perturbazione quasi-periodiche}, $v = u - u_\omega$, $q = p - p_\omega$). 
 
\appendix

\section{appendix}

\begin{lemma}\label{appendix0}
Let $n \in \N$, $\zeta > 0$ and $f : [0, + \infty] \to [0, + \infty]$ defined by $f(y) := y^n e^{- \zeta y}$. Then 
$$
{\rm max}_{y \geq 0} f(y) = (n/\zeta)^n e^{- n}
$$
\end{lemma}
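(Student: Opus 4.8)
The plan is to treat this as an elementary single-variable optimization problem on the half-line, so the whole argument is just: locate the unique interior critical point and evaluate there. First I would record the boundary behaviour of $f$. For $n \geq 1$ one has $f(0) = 0$, while $f(y) > 0$ for every $y > 0$, and since the exponential decay dominates polynomial growth, $f(y) \to 0$ as $y \to + \infty$. Together with the continuity of $f$ on $[0, + \infty)$, this guarantees that the supremum is a genuine maximum attained at some interior point $y_* \in (0, + \infty)$, which must therefore be a critical point of $f$.

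Next I would differentiate. A direct computation gives
$$
f'(y) = n y^{n - 1} e^{- \zeta y} - \zeta y^n e^{- \zeta y} = y^{n - 1} e^{- \zeta y}\big( n - \zeta y \big)\,.
$$
On $(0, + \infty)$ the factors $y^{n - 1}$ and $e^{- \zeta y}$ are strictly positive, so $f'(y) = 0$ forces $n - \zeta y = 0$, i.e. the unique interior critical point is $y_* = n / \zeta$. The same factorization yields the sign analysis at once: $f' > 0$ for $0 < y < n/\zeta$ and $f' < 0$ for $y > n/\zeta$, so $f$ is increasing then decreasing and $y_*$ is indeed the global maximizer on $[0, + \infty)$.

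Finally I would simply evaluate $f$ at $y_* = n/\zeta$, obtaining
$$
f(n/\zeta) = (n/\zeta)^n\, e^{- \zeta \cdot (n/\zeta)} = (n/\zeta)^n\, e^{- n}\,,
$$
which is exactly the claimed value of the maximum.

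There is essentially no obstacle in this proof; it is a routine calculus exercise and the only point deserving a word of care is the degenerate case $n = 0$ (should $0 \in \N$ in the paper's convention). In that case $f(y) = e^{- \zeta y}$ is strictly decreasing, so the maximum is attained at the boundary point $y = 0$ with value $1$, which is still consistent with the stated formula under the convention $0^0 = 1$. Since the lemma is only invoked with $n \geq 1$ (see the proof of Lemma \ref{lemma propagatore libero calore}), this case is immaterial for the applications.
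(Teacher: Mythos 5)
Your proof is correct and follows essentially the same route as the paper: boundary behaviour at $y=0$ and $y\to+\infty$, the factorization $f'(y)=y^{n-1}e^{-\zeta y}(n-\zeta y)$ locating the unique interior critical point $y_*=n/\zeta$, and direct evaluation there (your sign analysis of $f'$ and the remark on $n=0$ are slightly more explicit than the paper's, which in fact contains a harmless typo writing $e^{-y}$ for $e^{-\zeta y}$ in the derivative). Nothing further is needed.
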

\begin{proof}
One has that $f \geq 0$ and 
$$
f(0) = 0, \quad \lim_{y \to + \infty} f(y) = 0\,.
 $$
 Moreover 
 $$
 f'(y) =  y^{n - 1}e^{-  y} \Big( n -  \zeta y\Big)
 $$
 therefore $f$ admits a global maximum at $y = n/\zeta$. This implies that 
 $$
 {\rm max}_{y \geq 0} f(y) = f(n/\zeta) = (n/ \zeta)^n e^{- n}
 $$
 and the lemma follows. 
\end{proof}

\bigskip

\begin{flushright}

\textbf{Riccardo Montalto}

\smallskip

Dipartimento di Matematica ``Federigo Enriques''

Universit\`a degli Studi di Milano

Via Cesare Saldini 50

20133 Milano, Italy

\smallskip

\texttt{riccardo.montalto@unimi.it}
\end{flushright}

\end{document}